  \providecommand\BibTeX{{%
    \normalfont B\kern-0.5em{\scshape i\kern-0.25em b}\kern-0.8em\TeX}}}
\def\N{\mathbb{N}}
\def\P{\mathbb{P}}
\def\bQ{\mathbb{Q}}
\def\R{\mathbb{R}}
\def\Prob{\mathbb{P}}
\def\E{\mathbb{E}}
\def\ac{\mathcal{AC}}
\def\bR{\mathbb{R}}
\def\bx{\mathbf{x}}
\def\by{\mathbf{y}}
\def\bz{\mathbf{z}}
\def\be{\mathbf{e}}
\newcommand{\re}{\mathcal{H}}
\newcommand{\Var}{\textrm{Var}}
\newcommand{\Mane}{\textrm{Ma\~{n}\'{e}}}
\def\gradx{\nabla _{\mathbf{x}}}
\newcommand{\PN}[1]{{\color{black}#1}}
\newcommand{\rev}[1]{{\color{black}#1}}
\begin{document}

\title[Importance sampling for a simple Markovian intensity model]{Importance sampling for a simple Markovian intensity model using subsolutions}

\author{Boualem Djehiche}
\email{boualem@kth.se}
\author{Henrik Hult}
\email{hult@kth.se}
\author{Pierre Nyquist}
\email{pierren@kth.se}
\affiliation{%
  \institution{Department of Mathematics, KTH Royal Institute of Technology}
  \city{Stockholm}
  \country{Sweden}
  \postcode{100 44}
}

\renewcommand{\shortauthors}{Djehiche, Hult and Nyquist}

\begin{abstract}
This paper considers importance sampling for estimation of rare-event probabilities in a specific collection of Markovian jump processes \PN{ used for e.g.\ modelling of credit risk. Previous attempts at designing importance sampling algorithms have resulted in poor performance and the main contribution of the paper is the design of efficient importance sampling algorithms using subsolutions.} \PN{The dynamics of the jump processes causes the corresponding Hamilton-Jacobi equations to have an intricate state-dependence, which makes the design of efficient algorithms difficult.} We provide theoretical results that quantify the performance of importance sampling algorithms in general and construct asymptotically optimal algorithms for some examples. The computational gain compared to standard Monte Carlo is illustrated by numerical examples.

\end{abstract}

\begin{CCSXML}
<ccs2012>
<concept>
<concept_id>10002950.10003648.10003700</concept_id>
<concept_desc>Mathematics of computing~Probability and statistics~Stochastic processes</concept_desc>
<concept_significance>500</concept_significance>
</concept>
</ccs2012>
\end{CCSXML}

\ccsdesc[500]{Mathematics of computing}
\ccsdesc[300]{Mathematics of computing~Probability and statistics}
\ccsdesc[300]{Mathematics of computing~Stochastic processes}


\keywords{Large deviations, Monte Carlo, importance sampling, Markovian intensity models, credit risk}

\maketitle

\section{Introduction}
\PN{In this paper we analyse and develop Monte Carlo methods for rare-event estimation in certain Markovian intensity models, a class of stochastic jump models where the jump-intensities are Markovian with respect to deterministic functions of the current state of the entire system. Such models are prevalent in e.g.\ operations research—particularly in the financial context, see e.g.\ \cite{BGG15} and the references therein, and for queueing models \cite{asmussen2003}—stochastic chemical kinetics \cite{AndersonKurtz2011, vanKampen81}, and population models \cite{Jager2013, MeleardVillemonais12}. For non-trivial jump-intensities, explicit computations become intractable and there is a need for efficient computational methods.}

\PN{We \PN{focus} our study on a particular class of models, previously used for e.g.\ credit risk modelling, for which the construction of efficient computational schemes turns out to be very challenging. Specifically, we consider the difficult task of designing efficient importance sampling algorithms for Markovian intensity models of the form used in \cite{CarmonaCrepey}. In \cite{CarmonaCrepey, Carmona2} the authors study different types of Monte Carlo methods for estimating rare-event probabilities, corresponding to large portfolio losses, in two models for credit risk. In \cite{CarmonaCrepey} both importance sampling and interacting particle systems are used to estimate the probability of large portfolio losses before a fixed time horizon and it is shown that both schemes result in poor performance for different instances of the underlying model. }

\PN{Importance sampling is one of the most successful approaches to rare-event sampling and has been used in a wide array of areas in applied probability \cite{asmussen2007, RubinoTuffin}. In recent years, two main systematic approaches have emerged for the design of provably efficient methods: one based on constructing appropriate Lyapunov functions, developed mainly by Blanchet, Glynn and co-authors —see e.g.\ \cite{BlanchetGlynn, BlanchetLiu, BlanchetLi11, BlanchetGlynnLeder12, BlanchetHultLeder13}—and one based on linking importance sampling to appropriate (sub)solutions of Hamilton-Jacobi equations, developed by Dupuis, Wang and co-authors—see e.g.\ \cite{BD19, DupuisWang, DupuisWangSubsol, DupuisWang2005, DupuisWangLeder_wslqIS, DupSezerWang2007,DupuisWangJackson, DupSpilWang2012, DupuisLederWang07}. 

In this work, we use the subsolution approach to address the problem studied in \cite{CarmonaCrepey}. For the model in \cite{CarmonaCrepey} we obtain asymptotically optimal importance sampling algorithms and, for a natural generalisation of that model, we show how the subsolution approach can be used to construct efficient algorithms, validated by their performance in numerical experiments. To the best of our knowledge, this is the first work to apply the subsolution approach to obtain efficient algorithms in the setting of pure-jump processes with jump rates that have a non-zero gradient on a set of positive measure. This is different from, for example, queueing models, where importance sampling has been used extensively, the qualitative difference being that in the setting considered here, affine functions of the state of the process will not produce efficient algorithms. Rather, here the gradient of the subsolution must also be state-dependent.}

\PN{The modelling of credit risk and defaults in large portfolios has seen much work in recent years, in addition to \cite{CarmonaCrepey, Carmona2}, see for example \cite{Spil1, Spil2, Spil3, BGG15} and references therein. In those works, the authors consider rather complex models for the stochastic default intensity, meant to capture properties observed in the market, study the behavior of defaults as the size of the portfolio goes to infinity and consider affine point process models in this setting. In \cite{Spil1} it is emphasised that standard Monte Carlo is typically slow for large portfolios and long time horizons (hence their desire to develop new methods). Understanding how to design efficient Monte Carlo methods, in this case importance sampling, even for rather simple models is a valuable step towards constructing fast and accurate methods for more involved systems. 
For a general overview of Monte Carlo methods used in financial engineering see \cite{Glasserman2004}; examples of the use of importance sampling can be found in \cite{Glasserman2002, GlassermanLi, GuasoniRobertson2008}. In the more general setting of Markov jump processes, other common Monte Carlo methods include splitting, the cross-entropy method and genealogical methods, see e.g.\ \cite{asmussen2007, RubinsteinKroese04, DelMoralGarnier}.}

We note that although the motivation for this particular model comes from credit risk, the problem under consideration fits into the more general context of Monte Carlo methods for \PN{Markov jump processes} with mean-field characteristics, as a large part of the analysis does not depend on the specific form of the jump intensities. 



The remainder of the paper is organized as follows. In Section \ref{sec:Model} we introduce the relevant Markovian intensity model for credit risk, associated stochastic processes and probabilities of interest. Section \ref{sec:LD} reviews large deviation results for the type of Markov processes under consideration. In Section \ref{sec:IS} we review the basics of importance sampling for processes of the type described in Section \ref{sec:Model}, including the relevant measure of efficiency, and describe how efficient algorithms are linked to subsolutions of Hamilton-Jacobi equations. The main result of the Section, Theorem \ref{thm:asymptOpt}, quantifies the performance of algorithms constructed from subsolutions. In Section \ref{sec:Subsol} we construct efficient algorithms in the multi-dimensional setting. Lastly, in Section \ref{sec:Numerics} we present numerical experiments  that illustrate the performance of the proposed importance sampling algorithms. For completeness, Appendix \ref{sec:Isaacs} contains a formal derivation of the Hamilton-Jacobi equation of Section \ref{sec:IS}. 

\PN{\subsection{Notation}
\label{sec:notation}
The following notation is used. Elements in $\bR ^d$ are denoted by bold font, e.g. $\bx, \by, \bz$, whereas $x,y,z$ denote elements of $\bR$. With some abuse of notation, $0$ denotes the zero element in $\bR ^d$ for any $d\geq 1$. For $j=1, \dots, n$, $\be _j$ denotes the $d$-dimensional vector with a $1$ in the $j$th entry and all remaning entries set to 0: $\be _1 = (1, 0, \dots, 0)$, $\be _2 = (0, 1, 0,\dots, 0)$ and so on.  For a set $A \in \bR ^d$ and $n \in \mathbb{N}$, $nA$ denotes the set $\{ n\mathbf{x} : \ \bx \in A\}$. For an open set $A$ in $\mathbb{R} ^d$, $C^1(A)$ is the space of continuous real functions on $A$ with continuous partial derivatives. For a compact set $K$, $C^1(K)$ refers to functions that are $C^1(A)$ on some open neighbourhood $A$ of $K$. For $T\in [0,\infty)$, $\ac ([0,T]; \R^d)$ denotes the set of absolutely continuous functions $\psi \colon [0,T] \to \R^{d}$ and $\mathcal{D}([0,\infty); \R^{d})$ denotes the set of c\`adl\`ag functions from $[0,\infty)$ to $\R ^d$. For a function $f: \bR ^d \to \bR$, $\nabla f$ denotes the gradient of $f$. The partial derivative with respect to one variable is denote by subscript: $f_{x_i} = \frac{\partial}{\partial x_i} f$. Similarly, for a function $f: \bR \times \bR ^d \to \bR$, the gradient of $f$ with respect to the $\bR^d$-valued argument $\bx$ is denoted by $\nabla _\bx f = (f_{x_1}, \dots, f_{x_d})$. For functions $f: \bR \to \bR$ the derivative is simply denoted $f'$.}

\section{Model and problem formulation}
\label{sec:Model}
\PN{The class of models of interest is a subset of continuous-time pure jump Markov processes on $\bR ^d$, for some $d\geq1$. More specifically, in this paper we restrict to pure-birth processes with an upper limit of $nw_{j} \in \mathbb{N}$ births in the $j$th component, $j=1, \dots, d$, for $w_{j} > 0$ such that $nw_j$ is integer and $w_{1}+ \dots + w_{d} = 1$. Such a process can be viewed as modelling a population of $n\in\N$ individuals divided into $d$ groups, with $nw_{j} \in \mathbb{N}$ individuals in the $j$th group, counting the number of times a specific event (corresponding to a jump/birth) occurs in each group; in the credit risk model in \cite{CarmonaCrepey}, mentioned in the previous section, the ``event'' is the default of an obligor and groups correspond to obligors with the same credit rating}. For notational convenience, \PN{we define $D \subset \bR ^d$ to be the Cartesian product
\begin{align*}
D = [0,w_1] \times \dots \times [0,w_d].
\end{align*}}
\PN{The sets $D$ and $nD = [0, nw_1] \times \dots \times [0, nw_d]$} will act as state spaces for the stochastic processes we consider. \PN{In this paper we focus on jump intensities of the form  
\begin{align}
\label{eq:Model}
	\lambda_{j}(\bx) = a_j(w_{j} - x_{j}) e^{b\langle 1,\bx\rangle}, \ j=1,\dots d, \ \rev{\bx \in D},
\end{align}
for $a_1,\dots ,a_d$ and $b$ in $\R _+$, where $\langle \cdot,\cdot \rangle$ denotes the standard scalar product in $\R^d$. For each $n$, $\{Q^{n}(t); t \geq 0 \}$, $Q^{n}(0) = \mathbf{0} \in \bR ^d$, denotes a $d$-dimensional continuous-time pure jump Markov process on $nD$ with infinitesimal generator \rev{$L^n$ defined as follows: for $\mathbf{q} \in nD$, so that $\mathbf{q} = n\bx$ for some $\bx \in D$,
\begin{align*}
L^{n}f(\mathbf{q}) = n\sum_{j=1}^{d} \lambda_{j}(\bx) [f(\mathbf{q}+\be_{j}) - f(\mathbf{q})],
\end{align*}
for $f$ in some suitable class of functions}. The process $Q^{n}_{j}(t)$ represents the number of jumps in the $j$th group up to time $t$. The form \eqref{eq:Model} is motivated by \cite{CarmonaCrepey}, where it is used for modelling credit risk. Therein the process $Q^n$ counts the number of defaults in $d$ subgroups of obligors, out of a portfolio of $n$ obligors. The vector $\mathbf{a} = (a_1,\dots,a_d)$ characterises the default intensities in the $d$ different groups, whereas $b$ determines the contagion effect of the total number of defaults on the entire portfolio. The model \eqref{eq:Model} is a generalisation of the one used in the examples in \cite{CarmonaCrepey}: here different groups are allowed to have different intensities $a_i$. In \cite{CarmonaCrepey}, the authors hint at a model of the form \eqref{eq:Model}, with intensities differing between groups, but never explicitly state or consider any such examples. A similar collection of models is also considered in \cite{BGG15}. Therein the dependence structure does not allow for the type of contagion effect caused by the exponential term in \eqref{eq:Model}, however they consider more general jump-diffusion models. 
}

We are interested in studying the probability of \PN{ the sum of the components of $Q^n$} exceeding some (high) threshold before \PN{a fixed time horizon $T$}, which is also the focus of \cite{CarmonaCrepey}. More precisely, for some $z \in (0,1)$, we study the probability $p_n$ given by
\begin{align*}
	p_n = \Prob \biggl( \sum _{j=1} ^d Q _j ^n(T) \geq nz   \biggr) = \Prob \biggl( \sum _{j=1} ^d Q _j ^n (t) \geq nz, \textrm{ for some } t \leq T \biggr).
\end{align*}
The second equality follows from the fact that $Q^n$ is non-decreasing. For large $n$ the event that $Q^n$ exceeds $nz$ before time $T$ is a rare event, i.e., the probability $p_n$ will be small. For such choices \PN{of} $z$ standard Monte Carlo will be inefficient for estimating $p_n$ and the goal of the paper is to construct efficient importance sampling algorithms for this task. The construction of such algorithms becomes challenging when the intensities $\lambda_j$ are state-dependent, \PN{particularly when there is dependence between the different components as in \eqref{eq:Model} (sometimes referred to as cross-excitation of the components)}, and a detailed analysis is needed to find an appropriate sampling distribution. In the context of credit risk the described problem amounts to studying the probability that the proportion of defaults in the portfolio exceeds $z$ before time $T$.

\PN{Importance sampling amounts to choosing a new set of jump intensities $\bar \lambda _j$, $j=1, \dots, d$, as described in Section \ref{sec:ISmodel}, and consider the corresponding jump process $\bar X ^n$. In order to choose the $\bar \lambda _j$s so that the resulting algorithm becomes efficient, asymptotic results as $n$ goes to infinity serve as a guide}. For this, denote by $\{X^{n}(t); t \geq 0\}$ the scaled process
\begin{align}
\label{eq:defXn}
	X^{n}(t) = \frac{1}{n}Q^{n}(t), 
\end{align}
also known as the density dependent population process; see e.g.\ Chapter 11 in \cite{EthierKurtz86}
The probability $p_n$ can then be expressed in terms of the scaled process $X^n$,
\begin{align}
\label{eq:probPn}
	p_n = \Prob \biggl( \sum _{j=1} ^d X_j ^n(T) \geq z \biggr) =  \Prob \biggl( \sum _{j=1} ^d X _j ^n (t) \geq z \textrm{ for some } t \leq T \biggr),
\end{align}
and large deviation asymptotics can be employed for this probability (i.e., for the process $X^n$) to aid in the choice \PN{of $\bar \lambda _j$s} and thus sampling distribution.



\section{Large deviations for the sequence of scaled jump-processes}
\label{sec:LD}
For each $n$, the process $\{ X^{n} (t); t \geq0 \}$ in \eqref{eq:defXn} is a continuous-time pure jump Markov process with infinitesimal generator $A ^n$ defined by 
\begin{align*}
	A^{n}f(\bx) = n\sum_{j=1}^{d} \lambda_{j}(\bx) [f(\bx+\be_{j}/n) - f(\bx)],
\end{align*}
for $f$ in some suitable class of functions, and takes values (with probability one) in $\mathcal{D}([0,\infty); \R^{d})$. The generator $A^n$ has an associated (scaled) Hamiltonian, $H^n$, defined by
\begin{align*}
	H^n f(x) = \frac{1}{n}e^{-nf(x)} A^n e^{nf(\bx)} = \sum_{j=1} ^d \lambda_j(\bx) \bigl( e^{n(f(\bx+\be_j/n)-f(\bx))} - 1\bigr).
\end{align*}
For example, if the function $f$ is $C^1$ and, for $\alpha \in \R ^d$,  the sum $\sum _j \lambda_j (\boldsymbol{\mathbf{x}}) e^{\langle \alpha, \be_j \rangle}$ is finite, it holds that
\begin{align*}
	\lim _{n \to \infty} H^n f(\bx) = \sum _{j=1} ^d \lambda _j (\bx) \bigl( e^{\langle \nabla f(\bx), \be_j \rangle} -1 \bigr).
\end{align*}
Define the function $H: D \times \R^{d} \to \R$ by
\begin{align}
\label{eq:Hamiltonian}
H(\bx,\alpha) = \sum_{j=1}^{d} \lambda_{j}(\bx)\bigl(e^{\langle \alpha, \be_j \rangle}-1\bigr),
\end{align}
and let $L$ be the convex conjugate of $H$,
\begin{align*}
	L(\bx,\beta) = \sup_{ \alpha \in \R^{d}} \Big[\langle \alpha,\beta\rangle - H(\bx,\alpha)\Big]. 
\end{align*}
A straightforward calculation gives the explicit form of $L$, 
\begin{align}\label{eq:Lagrangian}
L(\bx,\beta) = \langle \beta, \log \frac{\beta}{\lambda(\bx)} \rangle - \langle \beta - \lambda(\bx), 1 \rangle, 
\end{align}
where $\beta / \lambda (\bx)$ denotes component-wise division; \PN{the functions $H$ and $L$ are referred to as the Hamiltonian and the Lagrangian, respectively.}

\PN{Due to the assumptions on the \rev{jump intensities $\lambda _j$—which define the jump rates $r^n$ of the $\{ X^n\}$ processes, see Section \ref{sec:ISmodel}—}for each $T < \infty$, the sequence $\{ X^n \}$ satisfies Laplace principle on the sample path level; see \cite{FengKurtz, ShwartzWeiss}.} 
\begin{theorem}
\label{thm:LD}
The sequence $\{X^{n}(t); t \in [0,T]\}$ satisfies the Laplace principle with rate function $I_{\bx}$ given by
 \begin{align*}
	I_{\bx}(\psi) = \begin{cases} \int_{0}^{T} L(\psi(t), \psi'(t)) dt, & \psi \in \ac ([0,T]; \R^d), \ \textrm{non-decreasing and } \psi(0)=\bx, \\ \infty, & \textrm{otherwise}. \end{cases}
\end{align*}
That is, for any $\bx \in \R ^d$ and bounded, continuous function $h: \mathcal{D}([0,T]; \R^{d}) \to \R$, 
\begin{align*}
	\lim_{n\to \infty}
 \frac{1}{n} \log E_{\bx}[\exp\{-nh(X^{n})\}] = - \inf _{\rev{\psi}} \{I_{\bx}(\psi)+h(\psi)\},
 \end{align*}
  \rev{ where the infimum is over $\psi \in \ac ([0,T]; \R^d)$, non-decreasing and $\psi(0)=\bx$.}
 
\end{theorem}

We end this section by hinting at how the large deviation principle connects to the design of efficient simulation algorithms. Let $D_z$ be the set
\begin{align*}
D_z = \biggl\{ \by = (y_1, \dots, y_d) \in D \colon \sum _{j=1} ^d y_j \geq z \biggr\},
\end{align*}
and define the function $U \colon [0,T] \times D \to [0,\infty]$ as a conditional version of the rate function:
\begin{align}
\label{eq:LDvar}
	U(t,\bx) = \inf _{\psi} \biggl\{ \int_t^T L(\psi(s), \psi'(s)) ds: \psi(t) = \bx, \ \psi(T) \in D_z \biggr\},
\end{align}
where the infimum is over $\psi \in \ac ([0,T]; \R ^d)$ that are non-negative and non-decreasing (in each component). For each pair $(t,\bx)$, $U(t,\bx)$ is interpreted as the large deviation rate function associated with  the probability of reaching the set $ D_z$ before time $T$, when starting in state $\bx$ at time $t$. 
According to Theorem \ref{thm:LD}, the convex conjugate $L$ of $H$ acts as the local rate function for the sequence $\{ X^n \}$. This conjugacy between $L$ and $H$ provides the connection to a Hamilton-Jacobi equation, which can be used for designing efficient simulation algorithms; see Section \ref{sec:Subsol} and Appendix \ref{sec:Isaacs}.

\section{Importance sampling}
\label{sec:IS}
In this section we first review the basics of importance sampling and describe the importance sampling estimator of \eqref{eq:probPn} when using an alternative stochastic kernel (corresponding to an alternative sampling distribution). We then introduce a Hamilton-Jacobi equation related to the dynamics of the model of Section \ref{sec:Model} and recall the notion of subsolutions to this type of partial differential equation (PDE). \PN{In Section \ref{sec:HJperf} we prove the main result of this section, Theorem \ref{thm:asymptOpt}, which characterises the performance of importance sampling algorithms based on subsolutions, by linking the relative error to the initial value of the subsolution.}

\subsection{Basics of importance sampling}
Importance sampling is the method to simulate a system under different dynamics, i.e.\ probability distribution, than in the original model. In the present setting the task is to estimate the probability $p_n = \Prob (X^n(T) \in D_z)$, where $\Prob$ describes the original dynamics for the process $X^n$. To perform importance sampling, consider different dynamics and the associated probability measure $\bar{ \bQ} ^n$ such that $\Prob \ll \bar{\bQ} ^n$; \PN{ it is enough for the absolute continuity to hold on a sub-$\sigma$-algebra that contains an appropriate part of the state space, e.g. in this case the set $D_z$}. One sample of the importance sampling estimator, denoted by $\widehat p _n$, is the indicator function of the event times the Radon-Nikodym derivative associated with the change of measure from $\Prob$ to $\bar{ \bQ} ^n$:
\begin{align*}
	\widehat p _n = I\{ X ^n (T) \in D_z \} \frac{d\Prob}{d \bar{ \bQ }^n},
\end{align*}
where $ X ^n$ now has dynamics according to $\bar{ \bQ} ^n$. Including the Radon-Nikodym derivative ensures that $\widehat p _n$ is an unbiased estimator of $p_n$:
\begin{align*}
	E _{\bar {\bQ} ^n} [\widehat p _n] = E _{\bar{\bQ} ^n}\biggl[ I\{ X ^n (T) \in D_z \} \frac{d\Prob}{d \bar{ \bQ }^n} \biggr] = E _{\Prob} [I\{ X ^n (T) \in D_z \}] = p_n.
\end{align*}
Choosing a suitable alternative measure $\bar{ \bQ} ^n$ requires a measure of efficiency. \PN{The standard measure of efficiency for unbiased estimators is the relative error,
\begin{align}
\label{eq:RE}
	\textrm{RE} (\widehat p_n) = \frac{\sqrt{\Var (\widehat p_n)}}{p_n},
\end{align}
where a smaller relative error corresponds to a more efficient algorithm. By considering the square of $\textrm{RE} (\widehat p_n)$ and writing out the definition of the variance, we see that minimising the relative error amounts to minimising the second moment $E _{\bar{\bQ} ^n} [\widehat p_n ^2]$. The aim is therefore to choose a sampling distribution that minimises the second moment whilst still being feasible to implement (cf.\ the optimal zero-variance change of measure \cite{asmussen2007}).

How small can we hope for the second moment to be? 
By Jensen's inequality, $E _{\bar{\bQ}^n} [\widehat p _n ^2] \geq p_n ^2$ and using the large deviation principle of Theorem \ref{thm:LD} we have
\begin{align*}
	\liminf _{n \to \infty} \frac{1}{n} \log E _{\bar{\bQ}^n} [\widehat p _n ^2 ]\geq 2 \liminf _{n \to \infty} \frac{1}{n} \log p_n \geq -2U(0,0),
\end{align*} 
where $U$ is defined in \eqref{eq:LDvar}. This lower bound for the logarithmic asymptotics of $\widehat p _n ^2$ holds true for any $\bar{\bQ}^n$. A given $\bar{\bQ} ^n$ is said to be \emph{asymptotically optimal} if the corresponding upper bound holds as well, that is if
\begin{align*}
	\limsup _{n \to \infty} \frac{1}{n} \log E_{\bar{\bQ}^n} [\widehat p _n ^2] \leq -2 U(0,0).
\end{align*}
This is the notion of optimality we are concerned with in this paper. For the upcoming analysis it is useful to note that the second moment of $\widehat p_n$ under $\bar{\bQ} ^n$ is equal to the first moment of $\widehat p _n$ under $\Prob$,
\begin{align*}
	E _{\bar{\bQ} ^n} \left[ \widehat p _n ^2 \right] &= E _{\bar{\bQ}^n} \left[ I\{ X ^n (T) \in D_z\} \left( \frac{d\Prob}{d \bar{ \bQ }^n} \right)^2 \right] = E _{\Prob} \biggl[ I\{ X ^n (T) \in D_z\} \frac{d\Prob}{d \bar{ \bQ }^n} \biggr].
\end{align*}}
\subsection{Importance sampling for the process $X^n$}
\label{sec:ISmodel}
\PN{To facilitate importance sampling, the dynamics of the process $X^n$ can be described by characterising the $Q^n$ process as follows (recall that $X^n$ is a scaled version of $Q^n$): The jump intensity for $Q^n$ for going from a state $n\bx = n(x_1, \dots, x_d)$ to $n\bx+\be _{j} = (nx_1, nx_2, \dots, nx_j +1, \dots, nx_d)$ is
\begin{align*}
	r^{n}(\bx; \be_{j}) = n\lambda_{j}(\bx). 
\end{align*}
Then the total jump intensity, when in state $n\bx$, is given by
\begin{align*}
R(\bx) = \sum _{j=1} ^d r^n (\bx;\be_j) = \sum _{j=1} ^d n \lambda _j(\bx).
\end{align*}
Let $T_{1}, T_{2}, \dots$ be the jump times of $Q^{n}$, $T_0 = 0$, and $\tau _k = T_k - T_{k-1}$ the time between jumps, $k=1,2,\dots$. The stochastic kernel of $Q^{n}$ is then given by
\begin{align}
\begin{split}
\label{eq:defKernel}
	\Theta ^n (dt, \be_{j} \mid \bx) &= \Prob ( \tau _{k+1} = dt, Q^{n}(T_{k+1}) - Q^{n}(T_{k}) = \be_{j} \mid Q^{n}(T_{k}) = n\bx )
	\\ &= r^{n}(\bx; \be_{j})e^{-R(\bx)t}dt,
	\end{split}
\end{align}
where $k$ is some integer.

The dynamics of the process $X^n$ are determined by the stochastic kernel $\Theta ^n$ in \eqref{eq:defKernel}.} Importance sampling amounts to using a different stochastic kernel $\bar \Theta ^n$,
\begin{align}
\label{eq:ISkernel}
	\bar{\Theta}^{n}(dt, \be_{j} \mid x) &= \bar r^{n}(\bx; \be_{j})e^{-\bar R(\bx)t}dt, 
\end{align}
with $\bar R(\bx) = \sum_{j=1}^{d} \bar r^{n}(\bx; \be_{j})$ and jump intensities $\bar r ^n (\bx,\cdot)$ of the form
$$ \bar r ^n (\bx, \be_j) = n \bar \lambda _j(\bx),$$
for some vector $\bar \lambda (\bx) = (\bar \lambda_1 (\bx),...,\bar \lambda_d(\bx))$. That is, similar to $\lambda$ and $r^n$, $\bar \lambda$ is a function from $D$ to $[0,\infty ) ^d$ and the jump intensities $\bar r ^n(\bx,\cdot)$ are obtained by scaling this function by $n$. Hence, the choice of stochastic kernel $\bar \Theta ^n$ is determined by the choice of $\bar \lambda$.

For $z \in (0,1)$, define $N^z$ as the number of jumps required for the process to reach the target set $D_z$,
\begin{align*}
N^{z} &= \inf\left\{k \geq 1: \sum_{j=1}^{d} Q^{n}_{j}(T_{k}) \geq nz \right\} = \inf \Big \{k \geq 1:X^n (T_k) \in D_z \Big \},
\end{align*}
and $N^0$ as the number of jumps needed to exceed time $T$,
\begin{align*}
N^{0} & = \inf\{k \geq 1: T_{k} > T\}.
\end{align*}
 A single sample of the importance sampling estimator based on $\bar \Theta ^n$ \PN{in \eqref{eq:ISkernel}} is given by
\begin{align}
\label{eq:estimator}
	\widehat{p}_{n} = I\{N^{z} < N^{0} \} \prod_{k=1}^{N^{z}} \frac{\Theta ^n (
	\tau_{k},v_{k}\mid X ^n (T_{k-1}))}{\bar \Theta ^n (\tau_{k},v_{k}\mid X^n (T_{k-1}))},
\end{align}
where $v_{k} \in \{\be_{1}, \dots, \be_{d}\}$ is the direction of the $k$th jump and the $\tau_k$s denote times between jumps (see Section \ref{sec:Model}). \PN{The goal is now to choose the stochastic kernel $\bar{\Theta} ^n$ so that the second moment $E_{\bar{\bQ} ^n} [\widehat p_n ^2]$ is as small as possible.}


\subsection{Hamilton-Jacobi equation and choice of sampling distribution}
\label{sec:HJperf}
\PN{Starting from a large deviation scaling of the second moment $E_{\bar{\bQ} ^n} [\widehat p_n ^2]$, and defining a related stochastic control problem, Dupuis, Wang and co-authors, have established that efficient importance sampling algorithms are connected to certain PDEs of Hamilton-Jacobi type, see for example \cite{DupuisWangSubsol, BD19}. More precisely, the form of the PDE follows from minimising a particular value function associated with the second moment $E _{\bar{\bQ} ^n} \left[ \widehat p _n ^2 \right]$ and an asymptotic analysis reminiscent of the weak convergence approach to large deviations \cite{Dupuis97}. In the setting considered in this paper, the PDE, henceforth referred to as the Isaacs equation, of interest is defined as follows: A function $W: [0,T] \times \bR ^d$, \rev{with $W_t$ denoting the time-derivative of $W$,} solves the Isaacs equation if it satisfies, in an appropriate sense,
 \begin{align}
  \label{eq:IsaacsFinal}
  \begin{cases}
   	W_t (t,\bx) - 2 H \left(x, - \frac{1}{2}\gradx W(t,\bx) \right) = 0 & (t,\bx) \in [0,T) \times D \setminus D_z, \\
	W(T,\bx) = 0, & \bx \in D_z,
	\end{cases}
 \end{align}
and $W(T,\bx) = \infty$ for $\bx \notin D_z$. For importance sampling, the sampling distribution should be constructed from a subsolution to \eqref{eq:IsaacsFinal}; a formal derivation of this equation is provided in Appendix \ref{sec:Isaacs}.}

A classical subsolution to \eqref{eq:IsaacsFinal} is a continuously differentiable function $\bar W$ that satisfies 
\begin{align}
\label{eq:SubsolCond1}
	\bar W_t (t,\bx) -2H\biggl(\bx,- \frac{1}{2}\gradx W(t,\bx)\biggr) \geq 0, \ (t,\bx) \in [0,T) \times (D \setminus D_z),
\end{align}
and
\begin{align}
\label{eq:SubsolCond2}
	\bar W(T,\bx) \leq 0, \ \bx \in D_z.
\end{align}
Suppose that $\bar W$ is such a subsolution to \eqref{eq:IsaacsFinal}. The formal derivation of this Hamilton-Jacobi equation, specifically Proposition \ref{prop:Hamiltonian}, suggests that the sampling distribution $\bar \bQ ^n$ should be constructed from $\bar W$ by using jump intensities
\begin{align}
\label{eq:COM}
	\bar \lambda _ j (\bx) = \lambda _j(\bx) \textrm{exp} \left\{- \frac{1}{2}\langle \gradx \bar W(t,\bx), \be_j \rangle \right \}, \ j=1,...,d.
\end{align}
This is the form to be used for sampling distributions throughout the remainder of this paper. \PN{Starting with \cite{DupuisWangSubsol}, it has been shown for a range of models—in \cite{DupuisWangSubsol} the authors consider primarily sums of iid random variables and empirical measures of finite-state Markov chains—the performance of an importance sampling algorithm based on a subsolution $\bar W$ is determined by the initial value $\bar W (0,0)$. In Theorem \ref{thm:asymptOpt} we prove this result for the model under consideration here. First, a detour on  the connection between the variational problem $U$ defined in \eqref{eq:LDvar} and subsolutions to \eqref{eq:IsaacsFinal}.}

Recall that $U(t,\bx)$ is the variational representation of the large deviation rate of the probability of reaching the set $D_z $ before time $T$, starting in $\bx$ at time $t$. It turns out that the function $U(t,\bx)$ is a viscosity solution to the Hamilton-Jacobi equation
\begin{align}
\label{eq:LDPDE}
\begin{cases}
	U_t(t,\bx) - H(x,-\gradx U(t,\bx)) = 0, & (t,\bx) \in [0,T) \times (D \setminus D_z), \\
	U(T,\bx) = 0, & \bx \in D_z.
\end{cases}
\end{align}
\PN{For a range of different assumptions on the Hamiltonian $H$, this result has been established at varying levels of rigour, see for example \cite{Evans10}. For assumptions suitable for this paper, a rigorous proof} is provided in \cite{DHN_2013}. Moreover, subsolutions to the equation \eqref{eq:LDPDE} give rise to subsolutions to the equation \eqref{eq:IsaacsFinal}. Indeed, a subsolution $\bar U$ to \eqref{eq:LDPDE} satisfies
\begin{align*}
	2 \bar U _t  (t,\bx) - 2 H \left( x, -\frac{2 \gradx \bar  U (t,\bx)}{2} \right) = 2 \left( \bar U _t (t,\bx) - H (\bx, -\gradx \bar U (t,\bx))\right) \geq 0,
\end{align*}
where the inequality follows from the subsolution property. It follows that $2 \bar U$ is a subsolution to the Hamilton-Jacobi  equation \eqref{eq:IsaacsFinal}. Therefore, to construct efficient sampling algorithms it suffices to consider subsolutions to the Hamilton-Jacobi equation \eqref{eq:LDPDE}. This also implies that if the function $U$ can be computed explicitly, asymptotically optimal importance sampling algorithms can be obtained by using $\bar W = 2U$ in \eqref{eq:COM}.

We are now ready \PN{to state and prove the main result regarding performance of importance sampling algorithms based on subsolutions to \eqref{eq:IsaacsFinal}, for the model under consideration.}
\begin{theorem}
\label{thm:asymptOpt}
Let $\bar W$ be a subsolution of \eqref{eq:IsaacsFinal} which is $C^1(D)$. 
If $\widehat p _n$ is the importance sampling estimator based on the vector of jump intensities $\bar \lambda$ defined in \eqref{eq:COM}, then
\begin{align*}
	\limsup _{n \to \infty} \frac{1}{n} \log E _{\bar \Theta ^n} \left[ \hat p _n ^2\right] \leq -\frac{\bar W(0,0)}{2} - U(0,0).
\end{align*}
\end{theorem}
Before embarking on the proof of Theorem \ref{thm:asymptOpt}, we note that the formal derivation of the Isaacs equation \eqref{eq:IsaacsFinal} in Appendix \ref{sec:Isaacs} suggests that the performance, as measured by the relative error, is determined by the initial value of $\bar W$. This is precisely the conclusion of Theorem \ref{thm:asymptOpt} and thus there is no need to make the formal arguments of the derivation rigorous; the derivation serves to provide intuition for the form of the Hamilton-Jacobi equation.


\begin{proof}
The likelihood ratio between the sampling distribution $\bar{\bQ} ^n$ (stochastic kernel $\bar \Theta ^n$) and the original distribution $\P$ (stochastic kernel $\Theta ^n$) can be expressed as
	\begin{align*}
		\frac{d \P}{d \bar{\bQ} ^n} = \textrm{exp} \Biggl \{ \int _{0} ^{T _{N^z}} (\bar R (X^n(s)) - R(X^n (s)))ds + \sum _{k=1} ^{N^z} \log  \frac{r ^n (X ^n (T_{k-1}), v_k)}{\bar r ^n (X ^n (T_{k-1}), v_k)}  \Biggr\}. 
	\end{align*}
	To analyze the expectation of the likelihood ratio, define the measure $m^n (\cdot,\cdot)$ on $\R ^d$, given $\R ^d$, by
	\begin{align*}
		m^n(\bx,d\by) = \sum _{j=1} ^d n \lambda _j(\bx)\delta _{\be_j}(d\by), \ \bx\in \R ^d .
	\end{align*}
	Furthermore, let $\{ M^n (t,\cdot) \}_t$ denote the point process defined by the jumps of $X^n$. That is, $M^n(t,B)$ is the number of jumps of $X^n$ in $(0,t]$ in directions that are in $B \subset \R ^d$,
	\begin{align*}
	M^n (t, B) = n \sum _{j =1} ^d X^n _j(t) I\{\be_j \in B \}.
	\end{align*} 
	At any time $t$, the instantaneous jump intensity of $M^n$ is $m^n(X^n(t),\cdot)$.
	
	With the jump intensities $\bar \lambda$ taken as in \eqref{eq:COM}, the likelihood ratio can be expressed as
	\begin{align*}
		\frac{d \P}{d \bar{\bQ} ^n} = \textrm{exp} &\left \{ \int _0 ^{T_{N^z}} n H\left (X^n(s), - \frac{1}{2}\gradx \bar W (s, X^n(s)) \right )ds \right. \\
		& \left. \quad + \frac{1}{2} \sum _{k=1} ^{N^z} \langle \gradx \bar W (T_{k-1}, X^n(T_{k-1})), v_k\rangle \right\}.
	\end{align*}
Using the definitions of $m^n$ and $M^n$, the likelihood ratio takes the form
	\begin{align*}
		\frac{d \P}{d \bar{\bQ} ^n} = \textrm{exp} & \left \{ \int _0 ^{T_{N^z}} n H\left (X^n(s), - \frac{1}{2}\gradx \bar W (s, X^n(s)) \right )ds \right. \\
		& \quad + \frac{n}{2} \int _0 ^{T_{N^z}} \int _{\R ^d} \left( \bar W (s, X^n(s) + \frac{\by}{n} ) - \bar W (s, X^n(s)) \right) dM^n(s,\by)  \\
		& \quad - \frac{n}{2} \sum_{k=1} ^{N^z} \left( \bar W (T_{k-1}, X^n(T_{k-1}) + v_k /n ) - \bar W (T_{k-1}, X^n(T_{k-1})) \right) \\
		& \quad \left.+ \frac{n}{2} \sum _{k=1} ^{N^z} \langle \gradx \bar W (T_{k-1}, X^n(T_{k-1})), v_k /n \rangle \right\} .
	\end{align*}		
By partial integration 

	\begin{align*}
		& \int _0 ^{T_{N^z}} \int _{\R ^d} \left( \bar W (s, X^n(s) + \by/n ) - \bar W (s, X^n(s)) \right) dM^n(s,\by)\\
		& \quad = \bar W (T_{N^z}, X^n(T_{N^z})) - \bar W (0,0) -  \int _0 ^{T_{N^z}} \bar W _t (s, X^n (s)) ds. 
	\end{align*}
Since $\bar W$ is assumed to be $C^1(D)$ and the state space $D$ is a compact subset of $\R^d$, the convergence
\begin{align*}
	n \left( \bar W \left(t, \bx + \frac{\be_j}{n}\right) - \bar W(t,\bx) \right) \to \langle \gradx \bar W(t,\bx), \be_j \rangle,
\end{align*}
as $n \to \infty$, is uniform in $\bx$. Hence, there is a sequence $C_n$ such that $C_n \to 0$ as $n \to \infty$, and
\begin{align*}
	\sup _{\bx \in D, j\in\{1,\dots ,d\}} \left| \langle \gradx \bar W(t,\bx), \be_j \rangle - n \left( \bar W \left(t, \bx + \frac{\be_j}{n}\right) - \bar W(t,\bx) \right) \right| \leq C_n.
\end{align*}
The uniform convergence thus implies the upper bound
\begin{align*}
	N^z C_n \geq n \sum _{k=1} ^{N^z} &\left( \langle \gradx \bar W (T_{k-1}, X^n(T_{k-1})), v_k /n \rangle \right. \\
	& \quad \left. - \bar W (T_{k-1}, X^n(T_{k-1}) + v_k /n ) + \bar W (T_{k-1}, X^n(T_{k-1}))  \right),
\end{align*}
which gives an upper bound for the likelihood ratio,
\begin{align*}
	\frac{d \P}{d \bar{\bQ} ^n} \leq \textrm{exp} & \Biggl \{ - \frac{n}{2} \int _0 ^{T_{N^z}} \biggl( \bar W _t (s, X^n (s)) -2H \biggl (X^n(s), - \frac{1}{2}\gradx \bar W (s, X^n(s)) \biggr )  \biggr) ds \\
	& \quad + \frac{n}{2} \bar W (T_{N^z}, X^n(T_{N^z})) - \frac{n}{2} \bar W (0,0) + \frac{1}{2} N^z C_n \Biggr \}.
\end{align*}		
The assumption that $\bar W$ is a subsolution to \eqref{eq:IsaacsFinal} implies that the first integral is bounded from below by 0. Moreover, by the definition of $N^z$, $\bar W (T_{N^z}, X^n(T_{N^z})) \leq 0$. Hence, the following upper bound holds for $\widehat p _n = I\{ N^z < N^0 \} (d \P / d \bar{\bQ} ^n)$,
\begin{align*}
E_{\Theta ^n} \biggl[ I\{N^z < N^0 \} \frac{d \P}{d \bar{\bQ} ^n} \biggr] &\leq E_{\Theta ^n} \biggl[ I\{ N^z < N^0 \} \textrm{exp} \biggl\{ - \frac{n}{2} \bar W (0,0) + \frac{1}{2} N^z C_n  \biggr\} \biggr] \\
&= e^{- \frac{n}{2}\bar W(0,0)}E _{\Theta ^n} \left[ I\{ N^z < N^0\} e^{\frac{1}{2} N^z C_n} \right].
\end{align*}
\PN{Due to the form of the jump intensity $r^n$, the process $Q^n$, hence also the process $X^n$, can have at most $n$ jumps. It follows that $N^z \leq n$}. Combined with the upper bound just derived for the expectation of $\widehat p_n$, this yields the upper bound
\begin{align*}
	\frac{1}{n} \log E_{\Theta ^n} \left[ I\{N^z < N^0 \} \frac{d \P}{d \bar{\bQ} ^n} \right] & \leq - \frac{1}{2} \bar W(0,0) + \frac{C_n}{2} + \frac{1}{n}\log p_n.
\end{align*} 		
The result now follows from the large deviation principle for $p_n$ and the \PN{properties of the sequence $C_n$},
\begin{align*}
	\limsup _{n \to \infty} 	\frac{1}{n} \log E_{\Theta ^n} \left[ I\{N^z < N^0 \} \frac{d \P}{d \bar{\bQ} ^n} \right] \leq -\frac{1}{2} W(0,0) - U(0,0).
\end{align*}
\end{proof}
\section{Subsolutions and associated sampling algorithms}
\label{sec:Subsol}
In this section we construct a type of subsolution to \eqref{eq:IsaacsFinal} that gives rise to efficient importance sampling algorithms for the model described in Section \ref{sec:Model}. Recall from Section \ref{sec:IS} that the Isaacs equation \eqref{eq:IsaacsFinal} of interest is
\begin{align*}
	 \begin{cases}
   	W_t (t,\bx) - 2 H \left(x, - \frac{1}{2}\gradx W(t,\bx) \right) = 0 & (t,\bx) \in [0,T) \times D \setminus D_z, \\
	W(T,\bx) = 0, & \bx \in D_z,
	\end{cases}
\end{align*}
and for a subsolution $\bar W$ of this equation the corresponding importance sampling algorithm is defined by using jump intensities \eqref{eq:COM}:
\begin{align*}
	\bar \lambda _ j (\bx) = \lambda _j(\bx) \textrm{exp} \left\{- \frac{1}{2}\langle \gradx \bar W(t,\bx), \be_j \rangle\right \}, \ j=1,...,d.
\end{align*}
Before considering arbitrary dimension $d$, we begin with finding the optimal time-homogeneous sampling distribution for $d=1$. The one-dimensional case sets the stage for the more general construction and provides insight into the underlying idea. 
\subsection{The optimal time-homogeneous sampling distribution}
\label{sec:timeHom}
As an illustration of the idea for how to construct subsolutions for arbitrary dimension $d$, a special case of a method described in \cite{DHN_2013}, we start by considering the case $d=1$. \PN{Recall that to evaluate the performance of any importance sampler based on a subsolution $\bar W$, the initial value $\bar W(0,0)$ should be compared to $2U(0,0)$, with $U$ as in \eqref{eq:LDvar}. To facilitate such a comparison, we have the following result.}
\begin{proposition}
\label{prop:asympOpt1}
	For $d=1$, the large deviation rate is given by
	\begin{align*}
		U(0,0) = \int _0 ^z \log \left( 1 + \frac{ c }{\lambda(y)} \right) dy - c T,
	\end{align*}
where $c$ solves the equation
\begin{align*}
	\int _0 ^z \frac{dy}{\lambda(y) + c} = T.
\end{align*}
\end{proposition}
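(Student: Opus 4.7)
The natural approach is to establish matching lower and upper bounds for $U(0,0)$ via Legendre duality between $L$ and $H$. The starting point is the pointwise inequality
$$L(x,\beta) \geq \alpha\beta - H(x,\alpha),$$
valid for every $\alpha \in \R$. The plan is to apply this with a state-dependent dual variable $\alpha(y) := \log(1+c/\lambda(y))$, where $c$ is the constant specified in the proposition, and to work in the rare-event regime in which $c > 0$. For $d=1$ and this choice of $\alpha$, a direct computation using $H(y,\alpha) = \lambda(y)(e^\alpha - 1)$ yields the key property $H(y,\alpha(y)) = c$, that is, the Hamiltonian is constant along every trajectory when evaluated with this dual variable.

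For the lower bound, fix any admissible $\psi$ (non-decreasing and absolutely continuous, $\psi(0) = 0$, $\psi(T) \geq z$) and integrate the pointwise inequality with $\alpha(\psi(t))$ and $\dot\psi(t)$. The variational term collapses by the fundamental theorem of calculus to $\int_0^{\psi(T)} \alpha(y) \, dy$, while the Hamiltonian contribution integrates to $cT$. Since $\alpha(y) \geq 0$ for $c > 0$ and $\psi(T) \geq z$, the integral over $[z,\psi(T)]$ may be discarded to obtain
$$\int_0^T L(\psi,\dot\psi)\, dt \;\geq\; \int_0^z \log\!\left(1 + \frac{c}{\lambda(y)}\right) dy \;-\; cT.$$
Taking the infimum over $\psi$ yields the desired lower bound on $U(0,0)$.

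For the matching upper bound, I would exhibit an explicit extremal trajectory. Let $\psi^*$ solve the ODE $\dot\psi^*(t) = \lambda(\psi^*(t)) + c$, $\psi^*(0) = 0$; separation of variables gives $t = \int_0^{\psi^*(t)} dy/(\lambda(y)+c)$, so the defining equation for $c$ forces $\psi^*(T) = z$. Substituting $\dot\psi^* = \lambda(\psi^*) + c$ into $L$ gives $L(\psi^*,\dot\psi^*) = (\lambda(\psi^*)+c)\log(1 + c/\lambda(\psi^*)) - c$, and the change of variables $y = \psi^*(t)$ reduces $\int_0^T L(\psi^*,\dot\psi^*)\, dt$ to exactly $\int_0^z \log(1 + c/\lambda(y))\, dy - cT$. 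Combining the two bounds yields the claimed formula.

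The main obstacle is the need to work in the correct parameter regime: existence and uniqueness of $c$ solving $\int_0^z dy/(\lambda(y)+c) = T$ with $c > -\inf_{[0,z]}\lambda$ follows from strict monotonicity of the left side, but the lower bound argument genuinely requires $c > 0$, equivalently $T < \int_0^z dy/\lambda(y)$; otherwise the law-of-large-numbers trajectory already reaches $D_z$ with zero cost and $U(0,0) = 0$, so the formula should be read with this implicit restriction. A minor technical point is justifying the change-of-variables steps for absolutely continuous $\psi$, but this is standard given the monotonicity constraint.
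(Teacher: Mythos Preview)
Your argument is correct. The lower bound via Legendre duality with the state-dependent dual variable $\alpha(y)=\log(1+c/\lambda(y))$ and the upper bound via the explicit extremal trajectory $\dot\psi^*=\lambda(\psi^*)+c$ both go through as you describe, and your caveat about needing $c>0$ (equivalently $T<\int_0^z dy/\lambda(y)$, the genuine rare-event regime) is exactly the right technical qualification.

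The paper, however, takes a different route. It does not give a direct proof at all; instead it remarks that the proposition ``is a special case of the more general result presented next and a separate proof is therefore omitted,'' and then deduces it from Theorem~\ref{thm:subsol}, which invokes the \Mane\ potential machinery of \cite{DHN_2013}. In that framework the function $S^c(0,x)=\int_0^x\log(1+c/\lambda(y))\,dy$ is identified as the \Mane\ potential at level $c$, and the equality $\bar U^{c,y}(0,0)=U(0,0)$ for the optimal $c$ is obtained from general results on viscosity solutions of the stationary Hamilton--Jacobi equation $H(y,DS(y))=c$. The paper explicitly acknowledges that a proof ``by means of convex optimization arguments'' is possible, which is precisely what you have supplied.

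Your approach is more elementary and entirely self-contained: it needs nothing beyond the Fenchel inequality and a change of variables. The paper's approach buys generality and structure --- the same \Mane-potential construction produces subsolutions in any dimension and connects the importance-sampling design directly to the stationary Hamilton--Jacobi equation --- at the cost of importing external machinery. For the one-dimensional statement itself, your argument is the cleaner one.
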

\PN{This result can be proved using e.g.\ convex optimisation arguments. However, it is a special case of a more general result by the authors (Theorem \ref{thm:subsol}), presented later in this section and a separate proof is omitted.}

As mentioned in the introduction we now only consider algorithms for which the change of measure is independent of $t$. That is, if $\bar W$ is the subsolution from which the sampling distribution is constructed, then $\nabla _x \bar W(t,x)$ is a function of only \PN{$x \in \bR$; even though $x\in \bR$ here, we use the notation $\nabla _x \bar W$ for the derivative with respect to $x$ to facilitate comparison with the rest of the paper. To emphasize the assumption that $\nabla _x \bar W(t,x)$ is a function of only $x$, not $t$,} and to ease notation, let $\alpha (x) = - \nabla _x \bar W(t,x) / 2$. Then the jump intensity used for importance sampling can be expressed as 
\begin{align*}
	\bar \lambda (x) = \lambda (x) \textrm{exp}\left\{-\frac{1}{2}\nabla _x \bar W(t,x)\right\} = \lambda(x) \textrm{exp}\left\{\alpha(x)\right\}.
\end{align*}
\PN{From the assumptions on $\bar W$, we can represent such a function as}
\begin{align*}
	\bar W(t,x) = -2 \int _0 ^x \alpha(y) dy + g(t) + K,
\end{align*}
for some function $g$ and constant $K$. Let $A (x) = \int _0 ^x \alpha(y) dy$ and consider only functions $g$ of the form $g(t) = 2 c t$, for some constant $c$. Then,
\begin{align*}
	\bar W_{t}(t,x) = 2c, \quad \nabla _x \bar W(t,x) = -2\alpha(x).	
\end{align*}
For $\bar W$ to be a subsolution, $A$, $g$ and $K$ must be chosen so that conditions \eqref{eq:SubsolCond1}-\eqref{eq:SubsolCond2} are satisfied. For \eqref{eq:SubsolCond1} to hold, $\alpha$ must be such that
\begin{align*}
	0 &\leq \bar W_t (t,x) - 2 H \left(x, - \frac{1}{2}\nabla _x \bar W(t,x) \right) = 2c - 2 \lambda(x) \left( e^{\alpha (x)} - 1 \right),
\end{align*}
which implies that $\alpha(x)$ must satisfy
\begin{align*}
	\alpha(x) \leq \log \left( 1 + \frac{c}{\lambda(x)} \right).
\end{align*}
By setting $\alpha(x)$ equal to the right-hand side of the previous display, equality is achieved in \eqref{eq:SubsolCond1}. However $\alpha$ is well-defined only for $c \geq - \inf _{x \leq z} \lambda(z)$.  For this particular choice of $\bar W$ the terminal condition \eqref{eq:SubsolCond2} becomes
\begin{align*}
	0 & \geq \bar W(T,z) = 2c T - 2A(z) + K,
\end{align*}
and the constant $K$ must satisfy
\begin{align*}
	K \leq 2 A(z) - 2 c T.	
\end{align*}
From the discussion in Section \ref{sec:IS} and at the beginning of this section, it is clear that it is desirable to have the initial value $\bar W(0,0)$ as large as possible. Here, $\bar W(0,0) = K$ and the inequality gives the upper bound $2A(z)- 2cT$; take $K$ to be equal to this upper bound. The resulting subsolution $\bar W$ is given by
\begin{align*}
	\bar W(t,x) = 2 \int _x ^z \log \biggl( 1 + \frac{c}{\lambda (y)} \biggr)dy - 2c (T-t).
\end{align*}
Lastly, the constant $c$ can now be chosen so as to maximize $\bar W(0,0)$: Take $c = c ^*$,
\begin{align*}
	c^{*} = \textrm{argmax } \bar W (0,0) = \textrm{argmax }  \left\{ 2 \int_{0}^{z} \log\left(1+\frac{c}{\lambda(y)}\right) dy - 2 c T \right\},
\end{align*}
where only $c > - \inf _{x \leq z} \lambda(x) $ are considered. Differentiability with respect to $c$ implies that the optimal $c ^*$ must be a solution to the equation
\begin{align}
\label{eq:OptimalC}
	\int _0 ^z \frac{dy}{\lambda(y) + c} = T.
\end{align}
For this choice of $c^{*}$ the subsolution $\bar W$ has initial value
\begin{align*}
	\bar W(0,0) = 2 \int_{0}^{z} \log\left(1+\frac{c^{*}}{\lambda(y)}\right) dy - 2c^{*} T. 
\end{align*}
\PN{Comparing this expression to that of Proposition \ref{prop:asympOpt1}, we see that the importance sampling algorithm associated with $\bar W$ is asymptotically optimal.} 

To present the more general result, we begin with \PN{introducing the $\Mane$ potential \cite{Mane97}}. For $c \in \R$ and $\bx, \by \in \R ^d$, the \emph{$\Mane$ potential at level c}, denoted by $S^c(\bx,\by)$, is defined as
\begin{align*}
	S^c (\bx,\by) = \inf \left\{ \int _0 ^{\tau} \left( c + L(\psi (s), \psi' (s)) \right )ds, \ \psi(0) = \bx, \ \psi(\tau) = \by \right\},
\end{align*}
where $L$ is the local rate function defined in Section \ref{sec:LD} and the infimum is taken over $\psi \in \ac ([0,\infty) \colon \R ^d)$ and $\tau > 0$. In the current setting, because $X^{n}(0) = 0$ (\PN{in the context of credit risk, this corresponds to no defaults at time $0$}), the initial value of interest is $\bx _0=0$. The following \PN{results, Lemma \ref{lemma:Mane} and Theorem \ref{thm:subsol}, show how one can construct subsolutions, and thus importance sampling algorithms, using the relation between the Ma\~n\'e potential $S^c(0,\by)$ and the Hamiltonian $H$. The first result establishes that $S^c(0,\by)$ is a subsolution, a type of generalised solution often used for Hamilton-Jacobi equations \cite{Evans10}, to a stationary equation involving the Hamiltonian $H$}.
\begin{lemma}[cf. Proposition 2.2 in \cite{DHN_2013}]
\label{lemma:Mane}
	The function $\by \mapsto S^c(0,\by)$ is a viscosity solution of the stationary Hamilton-Jacobi equation
	\begin{align}
	\label{eq:stationary}
		H(\by, \nabla  S(\by)) = c,
	\end{align}
	for all $\by \neq 0$ and $c > c_H$, where $c_H$ satisfies
	\begin{align*}
	c_H \geq \sup _{\bx} \inf _{\mathbf{p}} H(\bx,\mathbf{p}).
	\end{align*}
\end{lemma}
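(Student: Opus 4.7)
The plan is to establish the viscosity subsolution and supersolution properties of $y \mapsto S^c(0,y)$ separately at any $y_0 \neq 0$, after first verifying that $S^c(0,\cdot)$ is finite (so that the statement is non-vacuous). For finiteness, I would exploit the hypothesis $c > c_H \geq \sup_x \inf_p H(x,p)$ to exhibit a classical (or at least viscosity) subsolution $u$ of the stationary equation $H(x,Du) \leq c$ on $\R^d$. Given any absolutely continuous path $\psi$ from $0$ to $y$ with $\psi(\tau) = y$, the Legendre inequality $\langle Du(\psi),\dot\psi\rangle \leq H(\psi,Du(\psi)) + L(\psi,\dot\psi) \leq c + L(\psi,\dot\psi)$ integrates to $u(y) - u(0) \leq \int_0^\tau (c + L(\psi,\dot\psi))\,ds$, so that $S^c(0,y) \geq u(y) - u(0) > -\infty$.

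For the subsolution property, suppose $\phi \in C^1$ and that $S^c(0,\cdot) - \phi$ attains a local maximum at $y_0 \neq 0$. For a fixed velocity $v$ in the feasible cone (non-negative components, in the present setting) and small $h > 0$, concatenate any near-optimal path to $y_0$ with the straight segment $\psi(s) = y_0 + sv$, $s \in [0,h]$. The triangle-type inequality for the Mañé potential yields
\begin{equation*}
S^c(0, y_0 + hv) \leq S^c(0, y_0) + \int_0^h \bigl(c + L(y_0 + sv, v)\bigr)\,ds.
\end{equation*}
Combining with the local maximum inequality $\phi(y_0 + hv) - \phi(y_0) \leq S^c(0,y_0+hv) - S^c(0,y_0)$, dividing by $h$ and sending $h \downarrow 0$ gives $\langle D\phi(y_0), v\rangle \leq c + L(y_0, v)$. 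Taking the supremum over admissible $v$ and invoking $H(y_0,\cdot) = L(y_0,\cdot)^*$ (established in Section \ref{sec:LD}) produces $H(y_0, D\phi(y_0)) \leq c$.

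For the supersolution direction, suppose $S^c(0,\cdot) - \phi$ attains a local minimum at $y_0 \neq 0$. Choose $\varepsilon_n \downarrow 0$ and a sequence of paths $\psi_n:[0,\tau_n] \to \R^d$ with $\psi_n(0) = 0$, $\psi_n(\tau_n) = y_0$, and actions $\int_0^{\tau_n}(c + L(\psi_n,\dot\psi_n))\,ds \leq S^c(0,y_0) + \varepsilon_n$. For $h > 0$ small, the restriction of $\psi_n$ to $[0,\tau_n - h]$ is admissible for $S^c(0,\psi_n(\tau_n - h))$, so
\begin{equation*}
S^c(0, y_0) - S^c(0, \psi_n(\tau_n - h)) \geq \int_{\tau_n - h}^{\tau_n}\bigl(c + L(\psi_n,\dot\psi_n)\bigr)\,ds - \varepsilon_n.
\end{equation*}
Applying the local minimum inequality $\phi(y_0) - \phi(\psi_n(\tau_n-h)) \leq S^c(0,y_0) - S^c(0,\psi_n(\tau_n-h))$, Taylor-expanding $\phi$, and sending first $n \to \infty$ then $h \downarrow 0$, one extracts (along a subsequence of terminal velocities $v_n$ that is bounded by coercivity of $L$) a limiting direction $v_\star$ with $\langle D\phi(y_0), v_\star\rangle \geq c + L(y_0, v_\star)$, hence $H(y_0, D\phi(y_0)) \geq c$ by the Legendre identity. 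The genuinely delicate point, and the step I expect to be the main obstacle, is extracting the limiting velocity $v_\star$: near-optimal paths may oscillate, so one needs either a superlinearity/coercivity estimate on $L$ that forces equi-boundedness of the terminal velocities, or the more robust PDE-theoretic trick of reducing to a dynamic programming principle and invoking the classical Crandall–Lions comparison machinery. Since the reference \cite{DHN_2013} supplies this argument in the generality needed, the practical route is to verify that our Hamiltonian $H(x,\alpha) = \sum_j \lambda_j(x)(e^{\langle \alpha, e_j\rangle}-1)$ satisfies the hypotheses (continuity in $x$, convexity and coercivity in $\alpha$ on the relevant cone, and $c$ above the critical value) so that Proposition~2.1 of \cite{DHN_2013} applies directly.
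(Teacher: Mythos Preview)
The paper does not supply its own proof of this lemma: it is stated with the attribution ``cf.\ Proposition 2.1 in \cite{DHN_2013}'' and nothing further. So there is nothing in the paper to compare against beyond the bare citation. Your closing paragraph --- verify that the Hamiltonian $H(x,\alpha)=\sum_j \lambda_j(x)(e^{\langle\alpha,e_j\rangle}-1)$ meets the standing hypotheses of \cite{DHN_2013} and then invoke Proposition~2.1 there --- is exactly the route the paper takes, only the paper leaves the hypothesis check implicit.

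What you add on top of that is a self-contained sketch of the viscosity argument (finiteness via a global subsolution, subsolution property via the concatenation/triangle inequality for the $\text{Ma\~n\'e}$ potential, supersolution property via near-optimal paths and a terminal-velocity extraction). This is the standard weak-KAM style proof and is correct in outline. Your own assessment of where the difficulty sits is accurate: the supersolution step needs either coercivity of $L(y_0,\cdot)$ to bound the terminal velocities, or a reduction to a dynamic programming principle and comparison. In the present model $L(x,\beta)=\sum_j\bigl(\beta_j\log(\beta_j/\lambda_j(x))-\beta_j+\lambda_j(x)\bigr)$ is superlinear on the non-negative cone, which is what makes the extraction go through; you might state this explicitly if you want the sketch to be self-contained. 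One minor point: your finiteness argument as written gives a lower bound $S^c(0,y)>-\infty$; for the viscosity statement to be non-vacuous you also want $S^c(0,y)<+\infty$, which follows by exhibiting any single admissible path (e.g.\ a straight line at constant speed) and noting that $L$ is finite on the interior of the cone.
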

The constant $c_H$ is known as \emph{$\Mane$'s critical value}, \PN{see \cite{Mane97}}. For the specific model considered here (see Example 2.1 in \cite{DHN_2013}),
\begin{align*}
	c_H = - \inf _{\bx \in D \setminus  D_z} \sum _{j=1} ^d \lambda _j(\bx).
\end{align*}
Note that this is consistent with the discussion leading up to Proposition \ref{prop:asympOpt1}, where it was necessary to take $c > - \inf _{x \leq z} \lambda(x)$ \PN{(recall that Proposition 5.1 concerns the one-dimensional setting, $x,z \in \bR$)}. 

The following theorem is a combination of results in \cite{DHN_2013} adapted to the current setting. \PN{It is a generalisation to higher dimensions of the method used for the one-dimensional case; for $d=1$ the theorem establishes the optimality of the importance sampling algorithm previously derived.}

\begin{theorem}[cf.\ Sections 3 and 5 in \cite{DHN_2013}]
\label{thm:subsol}
\PN{Suppose $S : \bR ^d \to \bR$ is a classical subsolution to \eqref{eq:stationary}.
Consider the collection of functions $\bar U ^{c,\by} : \bR \times \bR ^d \to \bR$ defined by
	\begin{align*}
		\bar U^{c, \by} (t, \bx) = S(\by) - S(\bx) - c(T-t)
	\end{align*}
where $c>c_H$ and $\by \in \partial D_z$. Then $\bar U^{c,\by} (\cdot, \cdot)$ is a classical subsolution to \eqref{eq:LDPDE}. Specifically, if $\by \mapsto S^c (0,\by)$ is $C^1(D)$, then the corresponding $\bar U ^{c, \by}$ is a classical subsolution to \eqref{eq:LDPDE}.}

Moreover, for $d=1$, if we take $y=z$ and $c$ such that 
\begin{align*}
	\bar U ^{c,z}(0,0) = \sup _{c > c_H} \{ S^c(0,z) - cT \},
\end{align*}
the corresponding $\bar U ^{c,z} (0,0)$ is equal to value of the large deviation rate function at $(0,0)$;
\begin{align*}
	\bar U ^{c,z} (0,0) = U(0,0).
\end{align*}	
\end{theorem}
The result together with the preceding discussion states that, for $c>c_H$ and $y \in \partial D _z$, $\bar W = 2 \bar U ^{c,\by}$ is a subsolution to the equation \eqref{eq:IsaacsFinal}. Moreover, in the case $d=1$, it states that $\bar W$ attains the maximal initial value $2U(0,0)$ if we choose $c$ appropriately. It follows that the corresponding choice of the sampling distribution achieves asymptotic optimality; see Section \ref{sec:IS} for a rigorous proof. 

It is important to point out that Theorem \ref{thm:subsol} and the results on performance (primarily Theorem \ref{thm:asymptOpt}) do not depend on the explicit form of the original jump intensity, described by  $\lambda$. Rather, both hold for any Markovian birth process with the structure described in Section \ref{sec:Model}. However, the particular choice of $\lambda$ becomes crucial when computing the explicit change of measure for a specific model, which in the context of Theorem \ref{thm:subsol} amounts to computing the $\Mane$ potential.

For $d=1$, $\partial D _z = \{ z \}$ and the $\Mane$ potential is precisely the function
\begin{align*} 
	S^c(0,x) = \int _0 ^x \log \biggl( 1 + \frac{c}{\lambda(y)} \biggr)dy.
\end{align*}
Thus, the construction of $\bar W$ according to Theorem \ref{thm:subsol} is precisely the subsolution constructed in Section \ref{sec:timeHom}, and \PN{Proposition \ref{prop:asympOpt1} becomes a corollary of Theorem \ref{thm:subsol}; asymptotic optimality follows from a combination of Proposition \ref{prop:asympOpt1} and Theorem \ref{thm:asymptOpt}.}


\subsection{Sampling distribution for multi-dimensional model}
\label{sec:choiceDist}
\PN{We now approach the task of finding an explicit change of measure for the model \eqref{eq:Model}, where the jump intensities are of the form
\begin{align*}
	\lambda _j (\bx) = a_j (w_j-x_j) e^{b \sum _{i=1} ^d x_i}, \ j=1, \dots, d,
\end{align*}
for some non-negative $a_j$s and $b$. Recall (see Section \ref{sec:Model}) that the original model in \cite{CarmonaCrepey} did not allow for different jump intensities for the different components of the process $Q^n$, i.e.\ they only considered the case $a_i \equiv a$, $i=1, \dots , d$, for some intensity $a$, rendering the model essentially one-dimensional. The model \eqref{eq:Model}, and by extension the simulation algorithms constructed in this paper, is thus a generalisation of that considered in \cite{CarmonaCrepey}.}
%

\PN{As outlined in the previous sections, the idea is to construct a suitable subsolution: grounded in Theorem \ref{thm:subsol}, the idea is to find $\gradx \bar W (t,\bx) = - 2 \gradx S^{c}(0,\bx)$.} For $d=1$, the derivation of $S^c(0,y)$ is provided in Section \ref{sec:timeHom} and the corresponding sampling distribution has jump intensity $\bar \lambda(x) = \lambda(x) e^{\alpha (x;c)}$, where
\begin{align*}
	\alpha (x, c) = \log \biggl( 1 + \frac{c}{\lambda(x)} \biggr),
\end{align*}
and $c$ solves \eqref{eq:OptimalC}. Hence, for the one-dimensional model the change of measure used for importance sampling is completely known up to the constant $c$, which one might need to determine numerically. 

\PN{For $d\geq 2$, a natural approach is to try and solve the variational problem in the definition of $S^{c}(0,\bx)$. Another approach is the one (implicitly) used for $d=1$: Find a function $\alpha (\bx;c)$ that solves the equation 
\begin{align}
\label{eq:alphaH}
	c = H(\bx,\alpha(\bx,c)) = \sum _{j=1} ^d \lambda _j(\bx) \left( e^{\langle \alpha (\bx,c), \be_j \rangle } -1 \right),
\end{align}
and then attempt to find a potential $A(\bx;c)$ such that $\gradx A(\bx;c) = \alpha(\bx;c)$.
However, for $d>1$, this requires that $\alpha (\bx ;c)$ forms a conservative vector field. Finding such solutions to \eqref{eq:alphaH} clearly depends on the choice of $\lambda$ and is a non-trivial task already for rather simple choices. }

Before discussing further the problem of finding efficient sampling distributions for $d>1$, \PN{we return to the special case considered in \cite{CarmonaCrepey}: $a_j \equiv a$, $j=1,\dots,d$, so that all groups are homogeneous. In this case the change of measure can once again be found explicitly by choosing $\alpha(\bx;c)$ according to}
\begin{align}
\label{eq:alpha}
	\langle \alpha(\bx;c), \be_j\rangle = \log \left( 1 + \frac{c}{\sum _{i=1} ^d \lambda _i (\bx)} \right), \ j=1, \dots, d.
\end{align}
This defines a conservative vector field and the corresponding potential $A(\bx;c)$, as well as the optimal $c$, is analogous to before, with
\begin{align*}
	A(\bx;c) = \int _0 ^{\sum x_i} \log \left( 1 + \frac{c}{\tilde \lambda(y)} \right)dy,
\end{align*}
\PN{where $\tilde \lambda (y) = a(1-y) e^{b\by}$.} Note that this relies on the form of \PN{the $\lambda_j$s}, specifically the fact that $\sum _i \lambda _i (\bx)$ is a function of $\bx=(x_1,\dots, x_d)$ only through the sum $\sum _i x_i$. An interesting observation is that this choice of sampling distribution amounts to $\gradx \bar W$ being perpendicular to the barrier the process is trying to cross, which seems intuitively appealing.

The function $\alpha (\bx,c)$ defined by \eqref{eq:alpha} is a solution to the stationary equation \eqref{eq:alphaH} \PN{for the general model \eqref{eq:Model}, with different intensities $a_i$,} not just the effectively one-dimensional case of \cite{CarmonaCrepey}. \PN{It is therefore tempting to base the sampling distribution on this choice also for the general model}. However, this $\alpha(\bx,c)$ is not necessarily a conservative vector field: for $d=2$ the (scalar) curl of $\alpha(\bx,c)$ is such that the necessary condition for existence of a potential $A(\bx,c)$ with $\alpha (\bx,c) = \gradx A(\bx,c)$ becomes
\begin{align*}
	\frac{e^{b(x_1 + x_2)}}{\left( \lambda_1(\bx) + \lambda _2(\bx) \right)^2 + \lambda_1(\bx) + \lambda _2(\bx)} \left(a_1 - a_2\right) = 0,
\end{align*}
which does not hold when the two groups have different intensities. At the moment, for general choices of $\mathbf{a} \in \R ^d$ and $b > 0$, it is not known to the authors how to find the $\Mane$ potential $S^{c}(0,\bx)$ corresponding to \eqref{eq:Model}; \PN{for example, for the case $b=0$ the above choice does produce a conservative vector field and we can use the corresponding subsolution.}  
\PN{Still, the obtained results can be used to guide the design of sampling algorithms and next we discuss one suggestion.} 

Consider the general form of the jump intensity model, where the $a_j$'s are non-negative and not necessarily equal. Let $a^* = \vee _{j=1} ^d a_j $ and define the vector $\alpha (\bx; c)$ by
\begin{align*}
	\langle \alpha  (\bx;c) ,\be_j \rangle = \log \left( 1 + \frac{c}{\sum _{j=1}^d a^* (w_j-x_j) e^{b \sum x_i}} \right), \ j=1, \dots, d.
\end{align*}
This choice of $\alpha (\bx;c)$ satisfies
\begin{align*}
	H(x,\alpha (\bx;c)) = c \left( \frac{\sum _{j=1} ^d a_j (w_j -x_j)}{ \sum _{j=1}^d a^* (w_j-x_j) }  \right) \leq c,
\end{align*}
and thus
\begin{align*}
	c - H(\bx, \alpha (\bx;c)) \geq 0.
\end{align*}
This is the correct inequality for a subsolution to the stationary Hamilton-Jacobi equation. Let $A (\bx; c)$ be the potential for the vector field $\alpha (\bx;c)$ and define the corresponding $\bar W (t,\bx)$ by
\begin{align*}
	\bar W (t,\bx) = 2 A (\by;c) - 2A(\bx;c) - 2c (T-t).
\end{align*}
This is a subsolution to \eqref{eq:IsaacsFinal} and we base our importance sampling algorithm on this choice of $\bar W$. for the special case of \cite{CarmonaCrepey}, with all groups homogeneous, $\bar W$ coincides with the optimal subsolution. However, optimal performance is no longer guaranteed by Theorem \ref{thm:subsol}.; 


\PN{ In order to use this choice of $\bar W$ it remains to determine the constant $c$. For this, the physical interpretation of $c$ as the energy level added to the system can be used. The choice of $c$ should then be such that trajectories of $X^n$ under the sampling measure take an appropriate amount of time to reach $D_z$; finding this energy level does not add any significant computational cost}. However, good performance is no longer suggested by Theorem \ref{thm:subsol}. In lieu of theoretical results on performance, this algorithm is studied numerically in Section \ref{sec:Numerics}, \PN{for some choices of parameter values for $d=2$ and $d=3$}, exhibiting good performance in the rare-event setting.

We end the section with a brief remark on the importance sampling algorithms used in \cite{CarmonaCrepey}. Therein the jump intensities of the sampling distributions are of the form
\begin{align*}
	\bar \lambda _j(\bx) = \gamma \lambda _j (\bx),
\end{align*}
for different values of $\gamma$. This choice corresponds to a state-independent change of measure. Such jump intensities are obtained as a special case of \eqref{eq:COM} by considering subsolutions $\bar W$ that are affine in the state-variable $\bx$. However, such affine subsolutions will typically not achieve a maximal initial value, which explains the poor performance observed in \cite{CarmonaCrepey}: \PN{whereas therein the value of $\gamma$ is selected via a trial-and-error strategy, an analysis using subsolutions shows that even the optimal choice of $\gamma$ will not lead to asymptotic optimality.}


\section{Numerical experiments}
\label{sec:Numerics}
We now present some numerical experiments for the importance sampling algorithms proposed in Section \ref{sec:Subsol}, for different choices of number of groups $d$ and parameter values in \eqref{eq:Model}: \PN{ recall that the jump intensities are of the form
\begin{align*}
	\lambda _j (\bx) = a_j (w_j-x_j) e^{b \sum _{i=1} ^d x_i}, \ j=1, \dots, d,
\end{align*}
for some non-negative $a_j$s and $b$. } In particular, we implement the optimal time-homogeneous importance sampler (defined in Theorem \ref{thm:subsol}) for the \PN{one-dimensional} examples studied in \cite{CarmonaCrepey}, 
verifying numerically the asymptotic optimality \PN{that follows from a combination of Theorem \ref{thm:asymptOpt} and Proposition \ref{prop:asympOpt1}}. \PN{Note that the results presented in this section are based on the embedded discrete-time Markov chain $\{ X^n(T_j); j\geq 0\}$, as opposed to the continuous-time process $\{ X^n (t); t\in [0,T] \}$; this does not affect the performance analysis in the previous sections, which remains intact for the embedded chain, see e.g.\ Sections 7 and 9 in \cite{DupuisWangLeder_wslqIS}.}

\PN{Before reporting our results, some comments on the numerical results in \cite{CarmonaCrepey} are in place. Therein, the authors do not compute the probability of the event $\{ X^n(T) \geq z \}$, for $z \in (0,1)$, but rather of $\{ X^n (t) = i/n \}$ for each $i=1, 2, \dots , n$. Moreover, they use a range of parameters to decide the importance sampling intensity, changing the parameter for different $i \in \{ 1, 2, \dots, n \}$. Because of this, that only no or extreme contagion are considered, and the fact that no type of uncertainty for the estimates (such as relative errors) are reported, the results are not suitable for direct comparisons to those presented in this paper. The main takeaway from the numerical experiments in \cite{CarmonaCrepey} is the inefficiency of the state-independent importance sampling algorithm, and the interacting particle system method, used therein when there is moderate to extreme contagion present in the model, showing the need for state-dependent alternatives in the case of importance sampling.

In what follows our experiments are grouped into examples with homogeneous groups (i.e., one or more groups with the same intensities), which in principle can be compared to the results in \cite{CarmonaCrepey}, and examples with $d \geq2$ groups with different intensities. For all experiments, the estimates are based on $100$ batches with $N=5000$ samples in each batch; probability estimates and relative errors are computed over batches.}

\PN{
\subsubsection*{Homogeneous groups with no to moderate contagion}
For the homogeneous setting, to stay consistent with \cite{CarmonaCrepey}, the number of obligors is $n=125$, the time of maturity is $T=5$ and we have used as intensity $a=0.01$; in the credit risk context, this corresponds to a homogeneous pool of obligors, all with default rate 0.01 (when we do not account for contagion effects). In Tables \ref{table:homB0} and \ref{table:homB5} we present simulation results for the one-dimensional version of the model \eqref{eq:Model} for $b=0$ (Table \ref{table:homB0}) and $b=5$ (Table \ref{table:homB5}). The subsolution used to construct the sampling distribution is that of Theorem \ref{thm:subsol}. 

As described in Section \ref{sec:Subsol}, the homogeneous version of the model can essentially be reduced to the one-dimensional setting, regardless of the dimension $d$, and the accuracy of the asymptotically optimal simulation algorithm is not affected. To illustrate this point, we consider an example with $d=5$, $a=0.01$, $b=5$ and equal partitioning amongst the $d$ groups: $w_j = 1/d$, $j=1, \dots, d$; the numerical results are presented in Table \ref{table:homd5B5} (compare to Table \ref{table:homB5}).

\begin{table}[ht]
\caption{Importance sampling and Monte Carlo estimates for the case of independent obligors; \PN{$d=1$, $n=125$, $T=5$}, $a=0.01$, $b=0$.} \label{table:homB0}
\centering
\begin{tabular}{c | c | c | c | c}
 & \multicolumn{2}{c|}{Importance sampling} & \multicolumn{2}{c}{Monte Carlo} \\
 $z$ & $\widehat p_n$ & $\textrm{RE}(\widehat p_n)$ & $\widehat p_n$ & $\textrm{RE}(\widehat p_n)$ \\ 
 \hline
 0.10 &8.238e-3 & 0.0219 & 8.282e-3 & 0.1389 \\
 0.15 &1.089e-5 & 0.027 & 1.200e-5& 3.978 \\
 0.20 & 1.737e-9 & 0.028 & - & - \\
 0.25  & 7.250e-15 & 0.031 & - & - \\
 0.30  & 3.499e-20 & 0.039 & - & - \\
 0.35  & 4.470e-26 & 0.038 & - & - \\
 0.40  & 1.624e-32 & 0.037 & - & - \\
 \hline
\end{tabular}
\end{table}

\begin{table}[ht]
\caption{Importance sampling and Monte Carlo estimates for a model with moderate contagion; \PN{$d=1$, $n=125$, $T=5$}, $a=0.01$, $b=5$.} \label{table:homB5}
\centering
\begin{tabular}{c | c | c | c | c}
 & \multicolumn{2}{c|}{Importance sampling} & \multicolumn{2}{c}{Monte Carlo} \\
 $z$ & $\widehat p_n$ & $\textrm{RE}(\widehat p_n)$ & $\widehat p_n$ & $\textrm{RE}(\widehat p_n)$ \\ 
 \hline
 0.10 & 4.389e-2 & 0.0183 & 4.383e-2 & 0.0613 \\
 0.15 & 9.337e-4 & 0.0210 & 9.660e-4 & 0.480 \\
 0.20 & 9.183e-6 & 0.0266 & 1.800e-5 & 3.196 \\
 0.25 & 2.552e-8 & 0.0273 & - & - \\
 0.30 & 1.380e-10 & 0.0295 & - & - \\
 0.35 & 7.280e-13 & 0.0343 & - & - \\
 0.40  & 4.089e-15 & 0.0322 & - & - \\
 \hline
\end{tabular}
\end{table}

\begin{table}[ht]
\PN{
\caption{\PN{Importance sampling and Monte Carlo estimates for a homogeneous model with moderate contagion; $d=5$, $n=125$, $T=5$ , $a_j=0.01$, $j=1, \dots, 5$, $b=5$, $w_j = 1/5$, $j=1, \dots, 5$.}} \label{table:homd5B5}
\centering
\begin{tabular}{c | c | c | c | c}
 & \multicolumn{2}{c|}{Importance sampling} & \multicolumn{2}{c}{Monte Carlo} \\
 $z$ & $\widehat p_n$ & $\textrm{RE}(\widehat p_n)$ & $\widehat p_n$ & $\textrm{RE}(\widehat p_n)$ \\ 
 \hline
 0.10 & 4.391e-2  & 0.0202 & 7.800e-2 & 0.0632 \\
 0.15 & 9.334e-4 & 0.0292 & 9.200e-4 & 0.516 \\
 0.20 & 8.447e-6 & 0.0362 & 2.000e-6 & 10.000 \\
 0.25 & 2.565e-8 & 0.0454 & - & - \\
 0.30 & 1.385e-10 & 0.0543 & -  & - \\
 0.35 & 7.247e-13 & 0.0577 & - & - \\
 0.40 & 4.102e-15 & 0.0734 & - & - \\
 \hline
\end{tabular}}
\end{table}
The results in Tables \ref{table:homB0}–\ref{table:homd5B5} illustrate the asymptotically optimal performace of the proposed method in the case of a homogeneous model (i.e., all intensities $a_j$ are equal). Table \ref{table:homB0} corresponds to the example of independent obligors ($b=0$) studied in Section 4 in \cite{CarmonaCrepey}. The results in Table \ref{table:homB5} are for a model with moderate contagion, in-between the cases of independent obligors ($b=0$) and extreme contagion ($b=13$). For $b=0$, the algorithm in \cite{CarmonaCrepey} works well, however for models with moderate contagion it performs poorly even in the one-dimensional case, whereas the method presented here is asymptotically optimal. 
 
  \subsubsection*{Inhomogeneous groups with moderate contagion}
We now move to experiments with $d=2$ and $d=3$ in \eqref{eq:Model}, i.e., in the credit risk context, we allow there to be more than one group of obligors with different default intensities. The importance sampling results are obtained using the subsolution $\bar W $ described in Section \ref{sec:choiceDist}. Note that we have not attempted to optimise the selection of the constant $c$ in the different computations and this could potentially drive the relative error down even further. 
 
As a first example, we consider two groups, $d=2$, with intensities $a_1 = 0.01, a_2=0.05$ and moderate contagion $b=5$, with the first group constituting $80\%$ of the population; numerical results are presented in Table \ref{table:inhomB5}. 
\begin{table}[ht]
\caption{Importance sampling, using $\bar W $, and Monte Carlo estimates for inhomogeneous groups; $d=2$, \PN{$n=125$, $T=2$}, $a=(0.01,0.05)$, $b=5$, $w =( 0.8, 0.2)$.}
\label{table:inhomB5}
\centering
\begin{tabular}{c | c | c | c | c}
 & \multicolumn{2}{c|}{Importance sampling} & \multicolumn{2}{c}{Monte Carlo} \\
 $z$ & $\widehat p_n$ & $\textrm{RE}(\widehat p_n)$ & $\widehat p_n$ & $\textrm{RE}(\widehat p_n)$ \\ 
 \hline
 0.08 & 0.0331 & 0.0218 & 0.0328 & 0.0738 \\
 0.10 & 2.923e-3 & 0.0271 & 3.002e-3 & 0.258 \\
 0.12 & 4.592e-4 & 0.0354 & 4.700e-4 & 0.678 \\
 0.14 & 2.167e-5 & 0.0457 & 2.200e-5 & 2.859 \\
 0.16 & 2.457e-6 & 0.0460 & - & - \\
 0.20 & 7.094e-9 & 0.0570 & - & - \\
 0.24  & 1.382e-11 & 0.0800 & - & - \\
 0.28  & 2.077e-14 & 0.102 & - & - \\
 \hline
\end{tabular}
\end{table}
Although the algorithm is not provably optimal in this inhomogeneous case, it exhibits excellent performance, with a relative error of only about 10\% for probabilities of order $10^{-14}$.


Next, we consider three groups, $d=3$, with intensities $a_1=0.005$, $a_2=0.01$, and $a_3=0.05$, and moderate contagion ($b=5$). The two first groups constitute 40\% of the population each and the third group, with intensity $a_3 = 0.05$, constitutes the remaining 20\%; the numerical results are presented in Table \ref{table:inhomd3B5}.
\begin{table}[ht]
\PN{
\caption{\PN{Importance sampling, using $\bar W $, and Monte Carlo estimates for inhomogeneous groups; $d=3$, $n=125$, $T=2$, $a=(0.005, 0.01,0.05)$, $b=5$, $w =(0.4, 0.4, 0.2)$.}}
\label{table:inhomd3B5}
\centering
\begin{tabular}{c | c | c | c | c}
 & \multicolumn{2}{c|}{Importance sampling} & \multicolumn{2}{c}{Monte Carlo} \\
 $z$ & $\widehat p_n$ & $\textrm{RE}(\widehat p_n)$ & $\widehat p_n$ & $\textrm{RE}(\widehat p_n)$ \\ 
 \hline
 0.04 & 0.406 & 0.0131 & 0.406 & 0.0156 \\
 0.08 & 1.534e-2 & 0.0267 & 1.511e-2 & 0.119 \\ 
 0.12 & 1.078e-4 & 0.0432 & 9.600e-5 & 1.340 \\
 0.14 & 3.272e-6 & 0.0445 & 2.000e-6 & 10.000  \\
 0.16 & 2.708e-7 & 0.0547 & – & – \\
 0.20 & 3.363e-10 & 0.0686 & – & – \\
 0.24 & 2.631e-13 & 0.107 & – & – \\
 0.28 & 1.581e-16 & 0.211 & – & – \\
 \hline
\end{tabular}}
\end{table}

Similar to the results in Table \ref{table:inhomB5}, the importance sampling results in Table \ref{table:inhomd3B5}, based on the subsolution $\bar W$, are excellent for the range of $z$ considered here. Compared to the homogeneous setting—see Tables \ref{table:homB0}-\ref{table:homd5B5}—the relative error starts to slowly increase as the probability becomes smaller. This is to be expected, as $\bar W$ does not correspond to an asymptotically optimal method. However the improvement over both standard Monte Carlo and the methods presented in \cite{CarmonaCrepey} is substantial: even for probabilities of order $10^{-16}$ the observed relative error is below $1/4$. Together with the other results in this sections, this shows how the subsolution approach to importance sampling in general, and the proposed method in particular, can lead to significant improvement in the design of efficient algorithms compared to a ``naive'' state-independent change-of-measure. 


}

\begin{acks}
We thank the Associate Editor and anonymous referees for helpful feedback and suggestions, which helped improve the manuscript. 

B.\ Djehiche's research was supported by the Swedish Research Council, H.\ Hult's research was supported by the G\"oran Gustafsson Foundation, and P.\ Nyquist's research was supported by the Verg foundation, the Swedish Research Council (VR-2018-07050) and the Wallenberg AI, Autonomous Systems and Software Program (WASP) funded by the Knut and Alice Wallenberg Foundation. We are very grateful for their financial support.
\end{acks}





\bibliographystyle{ACM-Reference-Format}
\bibliography{references}


\begin{thebibliography}{41}


\ifx \showCODEN    \undefined \def \showCODEN     #1{\unskip}     \fi
\ifx \showDOI      \undefined \def \showDOI       #1{#1}\fi
\ifx \showISBNx    \undefined \def \showISBNx     #1{\unskip}     \fi
\ifx \showISBNxiii \undefined \def \showISBNxiii  #1{\unskip}     \fi
\ifx \showISSN     \undefined \def \showISSN      #1{\unskip}     \fi
\ifx \showLCCN     \undefined \def \showLCCN      #1{\unskip}     \fi
\ifx \shownote     \undefined \def \shownote      #1{#1}          \fi
\ifx \showarticletitle \undefined \def \showarticletitle #1{#1}   \fi
\ifx \showURL      \undefined \def \showURL       {\relax}        \fi
\providecommand\bibfield[2]{#2}
\providecommand\bibinfo[2]{#2}
\providecommand\natexlab[1]{#1}
\providecommand\showeprint[2][]{arXiv:#2}



\bibitem[\protect\citeauthoryear{Anderson and Kurtz}{Anderson and
  Kurtz}{2011}]%
        {AndersonKurtz2011}
\bibfield{author}{\bibinfo{person}{David~F Anderson} {and}
  \bibinfo{person}{Thomas~G Kurtz}.} \bibinfo{year}{2011}\natexlab{}.
\newblock \showarticletitle{Continuous time Markov chain models for chemical
  reaction networks}.
\newblock In \bibinfo{booktitle}{\emph{Design and analysis of biomolecular
  circuits}}. \bibinfo{publisher}{Springer}, \bibinfo{pages}{3--42}.
\newblock


\bibitem[\protect\citeauthoryear{Asmussen}{Asmussen}{2003}]%
        {asmussen2003}
\bibfield{author}{\bibinfo{person}{S{\o}ren Asmussen}.}
  \bibinfo{year}{2003}\natexlab{}.
\newblock \bibinfo{booktitle}{\emph{Applied probability and queues}
  (\bibinfo{edition}{second} ed.)}.
\newblock \bibinfo{publisher}{Springer}, \bibinfo{address}{New York, NY}.
\newblock


\bibitem[\protect\citeauthoryear{Asmussen and Glynn}{Asmussen and
  Glynn}{2007}]%
        {asmussen2007}
\bibfield{author}{\bibinfo{person}{S{\o}ren Asmussen} {and}
  \bibinfo{person}{Peter~W. Glynn}.} \bibinfo{year}{2007}\natexlab{}.
\newblock \bibinfo{booktitle}{\emph{Stochastic simulation: Algorithms and
  analysis} (\bibinfo{edition}{first} ed.)}.
\newblock \bibinfo{publisher}{Springer}, \bibinfo{address}{New York, NY}.
  vi+476 pages.
\newblock
\showISBNx{0-387-30679-7}


\bibitem[\protect\citeauthoryear{Blanchet, Hult, and Leder}{Blanchet
  et~al\mbox{.}}{2013}]%
        {BlanchetHultLeder13}
\bibfield{author}{\bibinfo{person}{Jose Blanchet}, \bibinfo{person}{Henrik
  Hult}, {and} \bibinfo{person}{Kevin Leder}.} \bibinfo{year}{2013}\natexlab{}.
\newblock \showarticletitle{Rare-Event Simulation for Stochastic Recurrence
  Equations with Heavy-Tailed Innovations}.
\newblock \bibinfo{journal}{\emph{ACM Trans. Model. Comput. Simul.}}
  \bibinfo{volume}{23}, \bibinfo{number}{4}, Article \bibinfo{articleno}{22}
  (\bibinfo{year}{2013}), \bibinfo{numpages}{25}~pages.
\newblock


\bibitem[\protect\citeauthoryear{Blanchet and Li}{Blanchet and Li}{2011}]%
        {BlanchetLi11}
\bibfield{author}{\bibinfo{person}{Jose Blanchet} {and}
  \bibinfo{person}{Chenxin Li}.} \bibinfo{year}{2011}\natexlab{}.
\newblock \showarticletitle{Efficient Rare Event Simulation for Heavy-Tailed
  Compound Sums}.
\newblock \bibinfo{journal}{\emph{ACM Trans. Model. Comput. Simul.}}
  \bibinfo{volume}{21}, \bibinfo{number}{2}, Article \bibinfo{articleno}{9}
  (\bibinfo{year}{2011}), \bibinfo{numpages}{23}~pages.
\newblock
\showISSN{1049-3301}


\bibitem[\protect\citeauthoryear{Blanchet and Glynn}{Blanchet and
  Glynn}{2008}]%
        {BlanchetGlynn}
\bibfield{author}{\bibinfo{person}{Jose~H. Blanchet} {and}
  \bibinfo{person}{Peter~W. Glynn}.} \bibinfo{year}{2008}\natexlab{}.
\newblock \showarticletitle{Efficient rare-event simulation for the maximum of
  a heavy-tailed random walk}.
\newblock \bibinfo{journal}{\emph{Ann. of Appl. Prob.}} \bibinfo{volume}{18},
  \bibinfo{number}{4} (\bibinfo{year}{2008}), \bibinfo{pages}{1351--1378}.
\newblock


\bibitem[\protect\citeauthoryear{Blanchet, Glynn, and Leder}{Blanchet
  et~al\mbox{.}}{2012}]%
        {BlanchetGlynnLeder12}
\bibfield{author}{\bibinfo{person}{Jose~H. Blanchet},
  \bibinfo{person}{Peter~W.\ Glynn}, {and} \bibinfo{person}{Kevin Leder}.}
  \bibinfo{year}{2012}\natexlab{}.
\newblock \showarticletitle{On Lyapunov inequalities and subsolutions for
  efficient importance sampling}.
\newblock \bibinfo{journal}{\emph{TOMACS}} \bibinfo{volume}{22},
  \bibinfo{number}{3} (\bibinfo{year}{2012}), \bibinfo{pages}{1104--1128}.
\newblock


\bibitem[\protect\citeauthoryear{Blanchet and Liu}{Blanchet and Liu}{2008}]%
        {BlanchetLiu}
\bibfield{author}{\bibinfo{person}{Jose~H. Blanchet} {and}
  \bibinfo{person}{Jingchen Liu}.} \bibinfo{year}{2008}\natexlab{}.
\newblock \showarticletitle{State-dependent importance sampling for regularly
  varying random walks}.
\newblock \bibinfo{journal}{\emph{Adv. Appl. Prob.}}  \bibinfo{volume}{40}
  (\bibinfo{year}{2008}), \bibinfo{pages}{1104--1128}.
\newblock


\bibitem[\protect\citeauthoryear{Budhiraja and Dupuis}{Budhiraja and
  Dupuis}{2019}]%
        {BD19}
\bibfield{author}{\bibinfo{person}{Amarjit Budhiraja} {and}
  \bibinfo{person}{Paul Dupuis}.} \bibinfo{year}{2019}\natexlab{}.
\newblock \bibinfo{booktitle}{\emph{Representations and weak convergence
  methods for the analysis and approximation of rare events}
  (\bibinfo{edition}{first} ed.)}.
\newblock \bibinfo{publisher}{Springer}, \bibinfo{address}{New York, NY}.
\newblock


\bibitem[\protect\citeauthoryear{Carmona and Cr{\'e}pey}{Carmona and
  Cr{\'e}pey}{2010}]%
        {CarmonaCrepey}
\bibfield{author}{\bibinfo{person}{Ren{\'e} Carmona} {and}
  \bibinfo{person}{St{\'e}phane Cr{\'e}pey}.} \bibinfo{year}{2010}\natexlab{}.
\newblock \showarticletitle{Particle methods for the estimation of credit
  portfolio loss distributions}.
\newblock \bibinfo{journal}{\emph{Int. J. Theor. Appl. Finance}}
  \bibinfo{volume}{13}, \bibinfo{number}{4} (\bibinfo{year}{2010}),
  \bibinfo{pages}{577--602}.
\newblock
\showISSN{0219-0249}
\urldef\tempurl%
\url{https://doi.org/10.1142/S0219024910005905}
\showDOI{\tempurl}


\bibitem[\protect\citeauthoryear{Carmona, Fouque, and Vestal}{Carmona
  et~al\mbox{.}}{2009}]%
        {Carmona2}
\bibfield{author}{\bibinfo{person}{Ren{\'e} Carmona},
  \bibinfo{person}{Jean-Pierre Fouque}, {and} \bibinfo{person}{Douglas
  Vestal}.} \bibinfo{year}{2009}\natexlab{}.
\newblock \showarticletitle{Interacting particle systems for the computation of
  rare credit portfolio losses}.
\newblock \bibinfo{journal}{\emph{Finance Stoch.}} \bibinfo{volume}{13},
  \bibinfo{number}{4} (\bibinfo{year}{2009}), \bibinfo{pages}{613--633}.
\newblock
\showISSN{0949-2984}
\urldef\tempurl%
\url{https://doi.org/10.1007/s00780-009-0098-8}
\showDOI{\tempurl}


\bibitem[\protect\citeauthoryear{Del~Moral and Garnier}{Del~Moral and
  Garnier}{2005}]%
        {DelMoralGarnier}
\bibfield{author}{\bibinfo{person}{Pierre Del~Moral} {and}
  \bibinfo{person}{Josselin Garnier}.} \bibinfo{year}{2005}\natexlab{}.
\newblock \showarticletitle{Genealogical particle analysis of rare events}.
\newblock \bibinfo{journal}{\emph{Ann. Appl. Probab.}} \bibinfo{volume}{15},
  \bibinfo{number}{4} (\bibinfo{year}{2005}), \bibinfo{pages}{2496--2534}.
\newblock
\showISSN{1050-5164}
\urldef\tempurl%
\url{https://doi.org/10.1214/105051605000000566}
\showDOI{\tempurl}


\bibitem[\protect\citeauthoryear{Djehiche, Hult, and Nyquist}{Djehiche
  et~al\mbox{.}}{2019}]%
        {DHN_2013}
\bibfield{author}{\bibinfo{person}{Boualem Djehiche}, \bibinfo{person}{Henrik
  Hult}, {and} \bibinfo{person}{Pierre Nyquist}.}
  \bibinfo{year}{2019}\natexlab{}.
\newblock \showarticletitle{Min-max representations of viscosity solutions of
  Hamilton-Jacobi equations}.
\newblock \bibinfo{journal}{\emph{Preprint}} (\bibinfo{year}{2019}).
\newblock


\bibitem[\protect\citeauthoryear{Dupuis and Wang}{Dupuis and Wang}{2007a}]%
        {DupuisLederWang07}
\bibfield{author}{\bibinfo{person}{Leder~Kevin Dupuis, Paul} {and}
  \bibinfo{person}{Hui Wang}.} \bibinfo{year}{2007}\natexlab{a}.
\newblock \showarticletitle{Importance sampling for sums of random variables
  with regularly varying tails}.
\newblock \bibinfo{journal}{\emph{ACM Transactions of Modeling and Computer
  Simulation}}  \bibinfo{volume}{17} (\bibinfo{year}{2007}).
\newblock


\bibitem[\protect\citeauthoryear{Dupuis and Ellis}{Dupuis and Ellis}{1997}]%
        {Dupuis97}
\bibfield{author}{\bibinfo{person}{Paul Dupuis} {and}
  \bibinfo{person}{Richard~S. Ellis}.} \bibinfo{year}{1997}\natexlab{}.
\newblock \bibinfo{booktitle}{\emph{A weak convergence approach to the theory
  of large deviations} (\bibinfo{edition}{first} ed.)}.
\newblock \bibinfo{publisher}{John Wiley \& Sons, Inc.}, \bibinfo{address}{New
  York, NY}. vii+479 pages.
\newblock
\showISBNx{0-471-07672-4}


\bibitem[\protect\citeauthoryear{Dupuis, Leder, and Wang}{Dupuis
  et~al\mbox{.}}{2009}]%
        {DupuisWangLeder_wslqIS}
\bibfield{author}{\bibinfo{person}{Paul Dupuis}, \bibinfo{person}{Kevin Leder},
  {and} \bibinfo{person}{Hui Wang}.} \bibinfo{year}{2009}\natexlab{}.
\newblock \showarticletitle{Importance sampling for
  weighted-serve-the-longest-queue}.
\newblock \bibinfo{journal}{\emph{Math. Oper. Res.}} \bibinfo{volume}{34},
  \bibinfo{number}{3} (\bibinfo{year}{2009}), \bibinfo{pages}{642--660}.
\newblock
\showISSN{0364-765X}
\urldef\tempurl%
\url{https://doi.org/10.1287/moor.1090.0389}
\showDOI{\tempurl}


\bibitem[\protect\citeauthoryear{Dupuis, Sezer, and Wang}{Dupuis
  et~al\mbox{.}}{2007}]%
        {DupSezerWang2007}
\bibfield{author}{\bibinfo{person}{Paul Dupuis}, \bibinfo{person}{Ali~Devin
  Sezer}, {and} \bibinfo{person}{Hui Wang}.} \bibinfo{year}{2007}\natexlab{}.
\newblock \showarticletitle{Dynamic importance sampling for queueing networks}.
\newblock \bibinfo{journal}{\emph{Ann. Appl. Probab.}} \bibinfo{volume}{17},
  \bibinfo{number}{4} (\bibinfo{year}{2007}), \bibinfo{pages}{1306--1346}.
\newblock
\showISSN{1050-5164}
\urldef\tempurl%
\url{https://doi.org/10.1214/105051607000000122}
\showDOI{\tempurl}


\bibitem[\protect\citeauthoryear{Dupuis, Spiliopoulos, and Wang}{Dupuis
  et~al\mbox{.}}{2012}]%
        {DupSpilWang2012}
\bibfield{author}{\bibinfo{person}{Paul Dupuis}, \bibinfo{person}{Konstantinos
  Spiliopoulos}, {and} \bibinfo{person}{Hui Wang}.}
  \bibinfo{year}{2012}\natexlab{}.
\newblock \showarticletitle{Importance sampling for multiscale diffusions}.
\newblock \bibinfo{journal}{\emph{Multiscale Model. Simul.}}
  \bibinfo{volume}{10}, \bibinfo{number}{1} (\bibinfo{year}{2012}),
  \bibinfo{pages}{1--27}.
\newblock
\showCODEN{MMSUBT}
\showISSN{1540-3459}
\urldef\tempurl%
\url{https://doi.org/10.1137/110842545}
\showDOI{\tempurl}


\bibitem[\protect\citeauthoryear{Dupuis and Wang}{Dupuis and Wang}{2004}]%
        {DupuisWang}
\bibfield{author}{\bibinfo{person}{Paul Dupuis} {and} \bibinfo{person}{Hui
  Wang}.} \bibinfo{year}{2004}\natexlab{}.
\newblock \showarticletitle{Importance sampling, large deviations and
  differential games}.
\newblock \bibinfo{journal}{\emph{Stoch.\ and Stoch.\ Reports}}
  \bibinfo{volume}{76}, \bibinfo{number}{6} (\bibinfo{year}{2004}),
  \bibinfo{pages}{481--508}.
\newblock


\bibitem[\protect\citeauthoryear{Dupuis and Wang}{Dupuis and Wang}{2005}]%
        {DupuisWang2005}
\bibfield{author}{\bibinfo{person}{Paul Dupuis} {and} \bibinfo{person}{Hui
  Wang}.} \bibinfo{year}{2005}\natexlab{}.
\newblock \showarticletitle{Dynamic importance sampling for uniformly recurrent
  {M}arkov chains}.
\newblock \bibinfo{journal}{\emph{Ann. Appl. Probab.}} \bibinfo{volume}{15},
  \bibinfo{number}{1A} (\bibinfo{year}{2005}), \bibinfo{pages}{1--38}.
\newblock
\showISSN{1050-5164}
\urldef\tempurl%
\url{https://doi.org/10.1214/105051604000001016}
\showDOI{\tempurl}


\bibitem[\protect\citeauthoryear{Dupuis and Wang}{Dupuis and Wang}{2007b}]%
        {DupuisWangSubsol}
\bibfield{author}{\bibinfo{person}{Paul Dupuis} {and} \bibinfo{person}{Hui
  Wang}.} \bibinfo{year}{2007}\natexlab{b}.
\newblock \showarticletitle{Subsolutions of an Isaacs equation and efficient
  schemes for importance sampling}.
\newblock \bibinfo{journal}{\emph{Mathematics of Operations Research}}
  \bibinfo{volume}{32}, \bibinfo{number}{3} (\bibinfo{year}{2007}),
  \bibinfo{pages}{723--757}.
\newblock


\bibitem[\protect\citeauthoryear{Dupuis and Wang}{Dupuis and Wang}{2009}]%
        {DupuisWangJackson}
\bibfield{author}{\bibinfo{person}{Paul Dupuis} {and} \bibinfo{person}{Hui
  Wang}.} \bibinfo{year}{2009}\natexlab{}.
\newblock \showarticletitle{Importance sampling for {J}ackson networks}.
\newblock \bibinfo{journal}{\emph{Queueing Syst.}} \bibinfo{volume}{62},
  \bibinfo{number}{1-2} (\bibinfo{year}{2009}), \bibinfo{pages}{113--157}.
\newblock
\showISSN{0257-0130}
\urldef\tempurl%
\url{https://doi.org/10.1007/s11134-009-9124-y}
\showDOI{\tempurl}


\bibitem[\protect\citeauthoryear{Ethier and Kurtz}{Ethier and Kurtz}{1986}]%
        {EthierKurtz86}
\bibfield{author}{\bibinfo{person}{Stewart~N. Ethier} {and}
  \bibinfo{person}{Thomas~G. Kurtz}.} \bibinfo{year}{1986}\natexlab{}.
\newblock \bibinfo{booktitle}{\emph{Markov processes: Characterization and
  convergence}}.
\newblock \bibinfo{publisher}{John Wiley \& Sons, Inc., New York}. x+534 pages.
\newblock
\showISBNx{0-471-08186-8}
\urldef\tempurl%
\url{https://doi.org/10.1002/9780470316658}
\showDOI{\tempurl}


\bibitem[\protect\citeauthoryear{Evans}{Evans}{2010}]%
        {Evans10}
\bibfield{author}{\bibinfo{person}{Lawrence~C. Evans}.}
  \bibinfo{year}{2010}\natexlab{}.
\newblock \bibinfo{booktitle}{\emph{Partial differential equations}
  (\bibinfo{edition}{second} ed.)}. \bibinfo{series}{Graduate Studies in
  Mathematics}, Vol.~\bibinfo{volume}{19}.
\newblock \bibinfo{publisher}{American Mathematical Society, Providence, RI}.
  xxii+749 pages.
\newblock
\showISBNx{978-0-8218-4974-3}
\urldef\tempurl%
\url{https://doi.org/10.1090/gsm/019}
\showDOI{\tempurl}


\bibitem[\protect\citeauthoryear{Feng and Kurtz}{Feng and Kurtz}{2006}]%
        {FengKurtz}
\bibfield{author}{\bibinfo{person}{Jin Feng} {and} \bibinfo{person}{Thomas~G.
  Kurtz}.} \bibinfo{year}{2006}\natexlab{}.
\newblock \bibinfo{booktitle}{\emph{Large deviations for stochastic
  processes}}. \bibinfo{series}{Mathematical Surveys and Monographs},
  Vol.~\bibinfo{volume}{131}.
\newblock \bibinfo{publisher}{American Mathematical Society},
  \bibinfo{address}{Providence, RI}. xii+410 pages.
\newblock
\showISBNx{978-0-8218-4145-7; 0-8218-4145-9}


\bibitem[\protect\citeauthoryear{Giesecke, Spiliopoulos, and Sowers}{Giesecke
  et~al\mbox{.}}{2013}]%
        {Spil3}
\bibfield{author}{\bibinfo{person}{Kay Giesecke}, \bibinfo{person}{Konstantinos
  Spiliopoulos}, {and} \bibinfo{person}{Richard~B. Sowers}.}
  \bibinfo{year}{2013}\natexlab{}.
\newblock \showarticletitle{Default clustering in large portfolios: typical
  events}.
\newblock \bibinfo{journal}{\emph{Ann. Appl. Probab.}} \bibinfo{volume}{23},
  \bibinfo{number}{1} (\bibinfo{year}{2013}), \bibinfo{pages}{348--385}.
\newblock
\showISSN{1050-5164}
\urldef\tempurl%
\url{https://doi.org/10.1214/12-AAP845}
\showDOI{\tempurl}


\bibitem[\protect\citeauthoryear{Giesecke, Spiliopoulos, Sowers, and
  Sirignano}{Giesecke et~al\mbox{.}}{2015}]%
        {Spil1}
\bibfield{author}{\bibinfo{person}{Kay Giesecke}, \bibinfo{person}{Konstantinos
  Spiliopoulos}, \bibinfo{person}{Richard~B. Sowers}, {and}
  \bibinfo{person}{Justin~A. Sirignano}.} \bibinfo{year}{2015}\natexlab{}.
\newblock \showarticletitle{Large portfolio asymptotics for loss from default}.
\newblock \bibinfo{journal}{\emph{Math. Finance}} \bibinfo{volume}{25},
  \bibinfo{number}{1} (\bibinfo{year}{2015}), \bibinfo{pages}{77--114}.
\newblock
\showISSN{0960-1627}
\urldef\tempurl%
\url{https://doi.org/10.1111/mafi.12011}
\showDOI{\tempurl}


\bibitem[\protect\citeauthoryear{Glasserman}{Glasserman}{2004}]%
        {Glasserman2004}
\bibfield{author}{\bibinfo{person}{Paul Glasserman}.}
  \bibinfo{year}{2004}\natexlab{}.
\newblock \bibinfo{booktitle}{\emph{Monte {C}arlo methods in financial
  engineering}}. \bibinfo{series}{Applications of Mathematics (New York)},
  Vol.~\bibinfo{volume}{53}.
\newblock \bibinfo{publisher}{Springer-Verlag, New York}. xiv+596 pages.
\newblock
\showISBNx{0-387-00451-3}
\newblock
\shownote{Stochastic Modelling and Applied Probability.}


\bibitem[\protect\citeauthoryear{Glasserman, Heidelberger, and
  Shahabuddin}{Glasserman et~al\mbox{.}}{2002}]%
        {Glasserman2002}
\bibfield{author}{\bibinfo{person}{Paul Glasserman}, \bibinfo{person}{Philip
  Heidelberger}, {and} \bibinfo{person}{Perwez Shahabuddin}.}
  \bibinfo{year}{2002}\natexlab{}.
\newblock \showarticletitle{Portfolio value-at-risk with heavy-tailed risk
  factors}.
\newblock \bibinfo{journal}{\emph{Math. Finance}} \bibinfo{volume}{12},
  \bibinfo{number}{3} (\bibinfo{year}{2002}), \bibinfo{pages}{239--269}.
\newblock
\showISSN{0960-1627}
\urldef\tempurl%
\url{https://doi.org/10.1111/1467-9965.00141}
\showDOI{\tempurl}


\bibitem[\protect\citeauthoryear{Glasserman and Li}{Glasserman and Li}{2005}]%
        {GlassermanLi}
\bibfield{author}{\bibinfo{person}{Paul Glasserman} {and}
  \bibinfo{person}{Jingyi Li}.} \bibinfo{year}{2005}\natexlab{}.
\newblock \showarticletitle{Importance sampling for portfolio credit risk}.
\newblock \bibinfo{journal}{\emph{Management Science}}  \bibinfo{volume}{51}
  (\bibinfo{year}{2005}), \bibinfo{pages}{1643--1656}.
\newblock


\bibitem[\protect\citeauthoryear{Guasoni and Robertson}{Guasoni and
  Robertson}{2008}]%
        {GuasoniRobertson2008}
\bibfield{author}{\bibinfo{person}{Paolo Guasoni} {and} \bibinfo{person}{Scott
  Robertson}.} \bibinfo{year}{2008}\natexlab{}.
\newblock \showarticletitle{Optimal importance sampling with explicit formulas
  in continuous time}.
\newblock \bibinfo{journal}{\emph{Finance Stoch.}} \bibinfo{volume}{12},
  \bibinfo{number}{1} (\bibinfo{year}{2008}), \bibinfo{pages}{1--19}.
\newblock
\showISSN{0949-2984}
\urldef\tempurl%
\url{https://doi.org/10.1007/s00780-007-0053-5}
\showDOI{\tempurl}


\bibitem[\protect\citeauthoryear{J{\"a}ger, Rost, and Tautu}{J{\"a}ger
  et~al\mbox{.}}{2013}]%
        {Jager2013}
\bibfield{author}{\bibinfo{person}{Willi J{\"a}ger}, \bibinfo{person}{Hermann
  Rost}, {and} \bibinfo{person}{Petre Tautu}.} \bibinfo{year}{2013}\natexlab{}.
\newblock \bibinfo{booktitle}{\emph{Biological Growth and Spread: Mathematical
  Theories and Applications, Proceedings of a Conference Held at Heidelberg,
  July 16--21, 1979}}. Vol.~\bibinfo{volume}{38}.
\newblock \bibinfo{publisher}{Springer Science \& Business Media}.
\newblock


\bibitem[\protect\citeauthoryear{Ma\~{n}\'{e}}{Ma\~{n}\'{e}}{1997}]%
        {Mane97}
\bibfield{author}{\bibinfo{person}{R. Ma\~{n}\'{e}}.}
  \bibinfo{year}{1997}\natexlab{}.
\newblock \showarticletitle{Lagrangian flows: the dynamics of globally
  minimizing orbits}.
\newblock \bibinfo{journal}{\emph{Bull.\ Brazilian Mathematical Society}}
  \bibinfo{volume}{28}, \bibinfo{number}{2} (\bibinfo{year}{1997}),
  \bibinfo{pages}{141--153}.
\newblock


\bibitem[\protect\citeauthoryear{M\'el\'eard and Villemonais}{M\'el\'eard and
  Villemonais}{2012}]%
        {MeleardVillemonais12}
\bibfield{author}{\bibinfo{person}{Sylvie M\'el\'eard} {and}
  \bibinfo{person}{Denis Villemonais}.} \bibinfo{year}{2012}\natexlab{}.
\newblock \showarticletitle{Quasi-stationary distributions and population
  processes}.
\newblock \bibinfo{journal}{\emph{Probability Surveys}}  \bibinfo{volume}{9}
  (\bibinfo{year}{2012}), \bibinfo{pages}{340--410}.
\newblock


\bibitem[\protect\citeauthoryear{Rubino and Tuffin}{Rubino and Tuffin}{2009}]%
        {RubinoTuffin}
\bibfield{editor}{\bibinfo{person}{Gerardo Rubino} {and} \bibinfo{person}{Bruno
  Tuffin}} (Eds.). \bibinfo{year}{2009}\natexlab{}.
\newblock \bibinfo{booktitle}{\emph{Rare event simulation using {M}onte {C}arlo
  methods}}.
\newblock \bibinfo{publisher}{John Wiley \& Sons, Ltd., Chichester}. x+268
  pages.
\newblock
\showISBNx{978-0-470-77269-0}
\urldef\tempurl%
\url{https://doi.org/10.1002/9780470745403}
\showDOI{\tempurl}


\bibitem[\protect\citeauthoryear{Rubinstein and Kroese}{Rubinstein and
  Kroese}{2004}]%
        {RubinsteinKroese04}
\bibfield{author}{\bibinfo{person}{Reuven~Y. Rubinstein} {and}
  \bibinfo{person}{Dirk~P. Kroese}.} \bibinfo{year}{2004}\natexlab{}.
\newblock \bibinfo{booktitle}{\emph{The cross-entropy method}}.
\newblock \bibinfo{publisher}{Springer-Verlag, New York}. xx+300 pages.
\newblock
\showISBNx{0-387-21240-X}
\urldef\tempurl%
\url{https://doi.org/10.1007/978-1-4757-4321-0}
\showDOI{\tempurl}
\newblock
\shownote{A unified approach to combinatorial optimization, Monte-Carlo
  simulation, and machine learning.}


\bibitem[\protect\citeauthoryear{Shwartz and Weiss}{Shwartz and Weiss}{1995}]%
        {ShwartzWeiss}
\bibfield{author}{\bibinfo{person}{Adam Shwartz} {and} \bibinfo{person}{Alan
  Weiss}.} \bibinfo{year}{1995}\natexlab{}.
\newblock \bibinfo{booktitle}{\emph{Large deviations for performance
  analysis}}.
\newblock \bibinfo{publisher}{Chapman \& Hall}, \bibinfo{address}{London}.
  x+556 pages.
\newblock
\showISBNx{0-412-06311-5}
\newblock
\shownote{Queues, communications, and computing, With an appendix by Robert J.
  Vanderbei.}


\bibitem[\protect\citeauthoryear{Spiliopoulos and Sowers}{Spiliopoulos and
  Sowers}{2015}]%
        {Spil2}
\bibfield{author}{\bibinfo{person}{Konstantinos Spiliopoulos} {and}
  \bibinfo{person}{Richard~B. Sowers}.} \bibinfo{year}{2015}\natexlab{}.
\newblock \showarticletitle{Default clustering in large pools: large
  deviations}.
\newblock \bibinfo{journal}{\emph{SIAM J. Financial Math.}}
  \bibinfo{volume}{6}, \bibinfo{number}{1} (\bibinfo{year}{2015}),
  \bibinfo{pages}{86--116}.
\newblock
\showISSN{1945-497X}
\urldef\tempurl%
\url{https://doi.org/10.1137/130944060}
\showDOI{\tempurl}


\bibitem[\protect\citeauthoryear{van Kampen}{van Kampen}{1981}]%
        {vanKampen81}
\bibfield{author}{\bibinfo{person}{N.~G. van Kampen}.}
  \bibinfo{year}{1981}\natexlab{}.
\newblock \bibinfo{booktitle}{\emph{Stochastic processes in physics and
  chemistry}}. \bibinfo{series}{Lecture Notes in Mathematics},
  Vol.~\bibinfo{volume}{888}.
\newblock \bibinfo{publisher}{North-Holland Publishing Co., Amsterdam-New
  York}. xiv+419 pages.
\newblock
\showISBNx{0-444-86200-5}


\bibitem[\protect\citeauthoryear{Zhang, Blanchet, Gisecke, and Glynn}{Zhang
  et~al\mbox{.}}{2015}]%
        {BGG15}
\bibfield{author}{\bibinfo{person}{X Zhang}, \bibinfo{person}{J Blanchet},
  \bibinfo{person}{K Gisecke}, {and} \bibinfo{person}{P.~W Glynn}.}
  \bibinfo{year}{2015}\natexlab{}.
\newblock \showarticletitle{Affine point processes: approximation and efficient
  simulation}.
\newblock \bibinfo{journal}{\emph{Math. Oper. Res.}} \bibinfo{volume}{40},
  \bibinfo{number}{4} (\bibinfo{year}{2015}), \bibinfo{pages}{797–819}.
\newblock


\end{thebibliography}

\appendix
\section{Derivation of Isaacs equation}
\label{sec:Isaacs}
For completeness, and for the reader who wishes to develop some intuition, we end the paper with a formal derivation of the Isaacs equation associated with the importance sampling estimator \eqref{eq:estimator}.
%
Naturally, the argument follows closely the general steps used in other works on the subsolution approach for dynamic importance sampling; see \cite{DupuisWangSubsol} for an overview. We emphasize that the derivation is of a formal nature and not all steps are motivated rigorously. 

\PN{Recalling the discussion in Section \ref{sec:IS}, the main quantity of interest for performance analysis is the second moment
\begin{align*}
	\E _{\bar{\bQ} ^n} \left[ \widehat p _n ^2 \right] = \mathbb{E} _{\bQ ^n} \left[ I\{N^{z} < N^{0} \} \prod_{k=1}^{N^{z}} \frac{\Theta ^n (d
	\tau_{k},v_{k}\mid X ^n (T_{k-1}))}{\bar \Theta ^n (d\tau_{k},v_{k}\mid X^n (T_{k-1}))} \right].
\end{align*}
To find the most efficient choice of $\bar \Theta ^n$, one can minimise this second moment with respect to $\bar \Theta ^n$: Define $V^n$ to be this optimal second moment,

\begin{align*}
	V^n = \inf _{\bar \Theta ^n} E _{\Theta ^n} \left [ I\{N^{z} < N^{0}\} \prod_{k=1}^{N^{z}} \frac{\Theta^n(d
	\tau_{k},v_{k}\mid X^{n}(T_{k-1}))}{\bar \Theta^n(d\tau_{k},v_{k}\mid X^{n}(T_{k-1}))} \right],
\end{align*}
and define $W^n$ through a large-deviation-type scaling,
\begin{align*}
	W^n = -\frac{1}{n}\log V^n,
\end{align*}
To employ tools from stochastic control, for both $V^n$ and $W^n$ one can define time-and-state-dependent analogues by conditioning:
\begin{align}
\label{eq:VnApp}
	V^{n}(t,\bx) = \inf_{\bar \Theta^{n}} E_{\Theta ^n}\left[ I\{N^{z} < N^{0}\} \prod_{k=l}^{N^{z}} \frac{\Theta ^n (d
	\tau_{k},v_{k}\mid X^{n}(T_{k-1}))}{\bar \Theta ^n(d\tau_{k},v_{k}\mid X^{n}(T_{k-1}))} \mid X^{n}(t) = \bx\right],
\end{align}
where $l$ is such that $T_{l-1} \leq t < T_{l}$, and $W^n(t,\bx)$ is defined from $V^n(t,\bx)$ as $W^n$ is from $V^n$.} By the memoryless property of the exponential distribution no correction for the first step is needed in \eqref{eq:VnApp}; $\tau _l$ and $\tau_{l}\mid \tau_{l} \geq t-T_{l-1}$ have the same distribution.

As described in Section \ref{sec:IS} the jump intensities under consideration are of the form $	\bar r ^n (\bx,\be _j) = n \bar \lambda _j(\bx)$, where $\bar \lambda(\bx) = (\bar \lambda _1(\bx) , \dots , \bar \lambda _d (\bx))$ is not identical to the zero element in $\R ^d$. Since each stochastic kernel $\bar \Theta ^n$ is determined by the corresponding $\bar \lambda$, the infimum in \eqref{eq:VnApp} is over those $\bar \lambda(\bx)$ that are zero only for directions $j$ for which $\lambda$ is zero. Note that there will be a slight abuse of notation in that supremum and infimum is taken over $\bar r^n$, and in including the argument $\bx$ although the optimization will always take place for a fixed state $\bx$.

Because the process $X^n$ is constant between jumps, the likelihood ratio in \eqref{eq:VnApp} can be expressed as
\begin{align*}
	 &\textrm{exp} \left \{ \int _{T_l} ^{T _{N^z}} (\bar R (X^n(s)) - R(X^n (s)))ds + \sum _{k=l} ^{N^z} \log   \frac{r ^n (X ^n (T_{k-1}), v_k)}{\bar r ^n (X ^n (T_{k-1}), v_k)}  \right\} \\
	& \quad =  \textrm{exp} \left \{ \sum _{k=l} ^{N^z} \Bigl( (\bar R (X^n(T_{k-1})) - R( X ^n (T_{k-1})) ) \tau _k \right. \\
	&  \left. \left.  \quad \qquad \qquad+ \log  \frac{r ^n (X ^n (T_{k-1}), v_k)}{\bar r ^n (X ^n (T_{k-1}), v_k)}   \right)  \right \}.
\end{align*}
Integration over the joint distribution of $(\tau _l , v_l)$, using the independence of the $\tau _k$'s, yields
\begin{align*}
	V^n (t,\bx) &= \inf_{\bar r^{n} (\bx,\cdot)} \int _0 ^{\infty} \sum _{j=1} ^d \textrm{exp} \left \{ (\bar R (\bx) - R(\bx))u + \log \frac{r^n (\bx,\be _j)}{\bar r ^n (\bx, \be _j)} \right \} \\
	& \quad \times E_{\Theta ^n} \left[ I\{N^{z} < N^{0}\}  \textrm{exp} \left \{ \int _{T_{l+1}} ^{T_{N^z}}  \left( \bar R (X^n(s)) - R( X ^n (s)) \right) ds \right. \right. \\
	& \left. \left. \qquad \quad \qquad + \sum _{k =l+1} ^{N^z} \log  \frac{r ^n (X ^n (T_{k-1}), v_k)}{\bar r ^n (X ^n (T_{k-1}), v_k)}   \right \}   \mid X^{n}(t+u) = \bx + \frac{\be_j}{n}\right]  \\
	& \quad \times r^n (\bx,\be _j) e^{-R(\bx)u}du,
\end{align*}
and by the dynamic programming principle $V^n (t,\bx)$ satisfies the following dynamic programming equation,
\begin{align}
\begin{split}
\label{eq:dpeVn}
	V^n (t,\bx) = \inf_{\bar r^{n} (\bx,\cdot)} & \Biggl \{ \int _0 ^{\infty} \sum _{j=1} ^d e^{ (\bar R(\bx) - R(\bx))u } \frac{r^n (\bx,\be_j)}{\bar r ^n (\bx, \be_j)} \\
	&  \quad \times V^n (t+u,\bx+ \frac{\be_j}{n}) r ^n (\bx,\be_j) e^{-R(\bx)u}du \Biggr \}.
	\end{split}
\end{align}
From equation \eqref{eq:dpeVn} for $V^n(t,\bx)$ we obtain the corresponding equation for $W^n(t,\bx)$,
\begin{align*}
	n W^n (t,\bx) = \sup_{\bar r^{n} (\bx,\cdot)} & \Biggl \{ - \log \int _0 ^{\infty} \sum _{j=1} ^d e^{ (\bar R(\bx) - R(\bx))u } \frac{r^n (\bx,\be_j)}{\bar r ^n (\bx, \be_j)}\\
	& \quad \times e^{-n W^n (t+u, x+ \frac{e_j}{n})} r ^n (x,e_j) e^{-R(x)u}du \Biggr \}.
\end{align*}
Let $\hat \Theta ^n $ denote the stochastic kernel based on a set of jump intensities $\hat r ^n (\bx,\cdot)$, where $\hat r ^n (\bx,\be_j) = n \hat \lambda _j (\bx)$ for some function $\hat \lambda \colon D \to \R ^d$. 
From the relative entropy representation for exponential integrals (see, e.g., \cite{Dupuis97, DupuisWangSubsol}), 
\begin{align*}
	n W^n (t,\bx) &= \sup_{\bar r^{n} (\bx,\cdot)} \inf_{\hat \Theta ^{n} } \biggl \{ \re(\hat \Theta ^n \mid \Theta ^n )+ \int _0 ^{\infty} \sum _{j=1} ^d \bigl( ( R(\bx) - \bar R(\bx))u \\
	& \quad + \log \frac{\bar r^n (\bx,\be_j)}{r ^n (\bx, \be_j)} + n W^n (t+u, \bx+ \frac{\be_j}{n}) \bigr) \hat r ^n (\bx,\be_j) e^{-\hat R(\bx)u}du \biggr \},
\end{align*}
where $\re$ denotes the relative entropy. The infimum over $\hat \Theta ^n$ is equivalent to infimum over jump intensities $\hat r ^n (\bx,\cdot)$ and the likelihood ratio between $\hat \Theta ^n$ and $\Theta ^n$ is of the same form as the likelihood ratio between $\bar \Theta ^n$ and $\Theta ^n$. Writing out the relative entropy term explicitly and moving $nW^n(t,\bx)$ to the right-hand side,
 \begin{align}
 	0 &= \sup_{\bar r^{n} (\bx,\cdot)} \inf_{\hat r ^{n}(\bx,\cdot) } \biggl \{ \int _0 ^{\infty} \sum _{j=1} ^d \Big( (2R(\bx) - \bar R(\bx) - \hat R(\bx))u \label{eq:expression1} \\
	& \quad + \log \bar r ^n (\bx,\be_j) + \log \hat r ^n (\bx,\be_j) - 2 \log r^n (\bx,\be_j) \label{eq:expression2} \\
	& \quad + n\bigl( W^n (t+u, \bx + \frac{\be_j}{n}) - W^n (t,\bx) \bigr)  \Big) \hat r^n(\bx,\be_j) e^{-\hat R (\bx) u }du \label{eq:expression3} \biggr \}.
 \end{align}
 The three terms on the right-hand side are treated separately. The first two integrals are straightforward to compute:
 \begin{align*}
 	&\int _0 ^{\infty} \sum _{j=1} ^d \left( 2R(\bx) - \bar R(\bx) - \hat R(\bx) \right)u \hat r^n(\bx,\be_j) e^{-\hat R (\bx) u }du \\
	& \quad = \frac{1}{\hat R (\bx)} \left( 2R(\bx) - \bar R(\bx) - \hat R(\bx) \right),
 \end{align*}
and
\begin{align*}
 	& \int _0 ^{\infty} \sum _{j=1} ^d \bigl( \log \bar r ^n (\bx,\be_j) + \log \hat r ^n (\bx,\be_j) - 2 \log r^n (\bx,\be_j) \bigr)  \hat r^n(\bx,\be_j) e^{-\hat R (\bx) u }du \\
	& \quad = \frac{1}{\hat R (\bx)} \sum _{j=1} ^d \hat r^n (\bx,\be_j) \left( \log \bar r ^n (\bx,\be_j) + \log \hat r ^n (\bx,\be_j) - 2 \log r^n (\bx,\be_j) \right).
 \end{align*} 
The third integral, with integrand given in \eqref{eq:expression3}, can be expressed as an expectation involving an exponentially distributed random variable. Indeed, let $\{ \xi _n \}$ be a sequence of random variables each having an exponential distribution with mean $\hat R (\bx) ^{-1}$. Then,
 \begin{align*}
	& \int _0 ^{\infty} \sum _{j=1} ^d n \left (W^n (t+u, \bx + \frac{\be_j}{n}) - W^n (t,\bx) \right) \hat r^n(\bx,\be_j) e^{-\hat R (\bx) u }du \\
	& \quad = \sum _{j=1} ^d \frac{\hat r^n (\bx,\be_j)}{\hat R(\bx)} E \left[ n(W^n(t + \xi _n, \bx+\frac{\be_j}{n}) - W^n (t,\bx)) \right].
 \end{align*}
The expression involving (the integrals of) \eqref{eq:expression1}-\eqref{eq:expression3} can then be rewritten as
 \begin{align*}
 	0 &= \sup_{\bar r^{n} (\bx,\cdot)} \inf_{\hat r ^{n}(\bx,\cdot) } \Biggl \{ \frac{1}{\hat R (\bx)} ( 2R(\bx) - \bar R(\bx) - \hat R(\bx) ) \\
	& \quad + \frac{1}{\hat R (\bx)} \sum _{j=1} ^d \hat r^n (\bx,\be_j) \left( \log \bar r ^n (\bx,\be_j) + \log \hat r ^n (\bx,\be_j) - 2 \log r^n (\bx,\be_j) \right) \\
	& \quad + \sum _{j=1} ^d \frac{\hat r^n (\bx,\be_j)}{\hat R(\bx)} E \left[ n(W^n(t + \xi_n, \bx+\frac{\be_j}{n}) - W^n (t,\bx)) \right] \Biggr \}.
 \end{align*}
 Define the function $l \colon \R \to [0,\infty]$ by
 \begin{align*}
 	l(x) = \begin{cases} x\log x -x +1, & x \geq 0 \\ \infty, & \textrm{otherwise}. \end{cases}
 \end{align*}
With $\xi _n \sim \textrm{Exp} (n \hat \Lambda (\bx))$ and using the same notation for the jump intensities $\hat r^n$ and $\bar r^n$ (i.e., $\hat \lambda, \hat \Lambda$ and $\bar \lambda, \bar \Lambda$), the equation of interest can be expressed as
\begin{align*}
0 = \sup_{\bar \lambda (\bx)} \inf_{\hat \lambda (\bx) } & \Biggl \{ \frac{1}{\hat \Lambda (\bx)} \biggl( 2 \sum _{j=1} ^d \lambda _j(\bx) l \bigl ( \frac{\hat \lambda _j (\bx)}{\lambda _j (\bx)}\bigr) - \sum _{j=1} ^d \bar \lambda _j(\bx) l \bigl( \frac{\hat \lambda _j (\bx)}{\bar \lambda _j (\bx)}\bigr) \biggr) \\
&  \quad + \frac{1}{ \hat \Lambda(\bx)} \sum _{j=1} ^d \hat \lambda_j (\bx) E \left[ n(W^n(t + \xi _n, \bx+\frac{\be_j}{n}) - W^n (t,\bx)) \right] \Biggr \}.
\end{align*}
Recall that $W_t$ and $\gradx W$ denotes the time derivative of $W$ and the gradient in the space variable $\bx$, respectively. To formally obtain a limit PDE related to the stochastic control problem, assume that there is a suitable limit $W$ for $W^n$. More precisely, that there is a smooth function $W$ such that, as $n \rightarrow \infty$, $W^n (t,\bx) \to W(t,\bx)$ and
\begin{align*}
	n (W^n (t + \frac{u}{n}, \bx + \frac{\be_j}{n}) - W^n (t,\bx)) \rightarrow u W_t (t,\bx) + \langle \gradx W (t,\bx), \be_j \rangle.
\end{align*}
Consider the expectation involving the $\xi _n$'s. By a change of variable,
\begin{align*}
	&E \left[ n(W^n(t + \xi _n, \bx+\frac{e_j}{n}) - W^n (t,\bx)) \right] \\
	& \quad= \int _0 ^{\infty} n \left( W^n (t + \xi, \bx + \frac{e_j}{n}) - W^n(t,\bx) \right) n \hat \Lambda (\bx) e ^{-n \hat \Lambda(\bx) \xi} d\xi \\
	& \quad = \int _0 ^{\infty} n \left( W^n (t + \frac{\tau}{n}, \bx + \frac{\be_j}{n}) - W^n(t,\bx) \right) \hat \Lambda (\bx) e ^{- \hat \Lambda(\bx) \tau} d\tau.
\end{align*}
As $n$ goes to infinity, the assumed convergence of $W^n$ implies that, for each $\tau$, the integrand converges to $\tau W_t(t,\bx) + \langle \gradx W(t,\bx),\be_j \rangle$. Taking the limit inside the expectation,
\begin{align*}
	\lim _{n \rightarrow \infty} E \left[ n(W^n(t + \xi _n, \bx+\frac{\be_j}{n}) - W^n (t,\bx)) \right] = \frac{W_t (t,\bx)}{\hat \Lambda (\bx)} + \langle \gradx W(t,\bx),\be_j \rangle.
\end{align*}
Thus, in the limit as $n$ goes to infinity, the dynamic programming equation for $W^n$ gives rise to the following Isaacs equation,
 \begin{align}
 \begin{split}
 \label{eq:IsaacsAlt}
 	0 = \sup_{\bar \lambda (\bx)} \inf_{\hat \lambda (\bx) } \frac{1}{\hat \Lambda (\bx)} & \Biggl \{ \sum _{j=1} ^d \Bigr( 2\lambda _j(\bx) l \bigl ( \frac{\hat \lambda _j (\bx)}{\lambda _j (\bx)}\bigr) -  \bar \lambda _j(\bx) l \bigl( \frac{\hat \lambda _j (\bx)}{\bar \lambda _j (\bx)}\bigr) \Bigr)  + W_t (t,\bx) \\ 
	& \quad + \sum _{j=1} ^d \hat \lambda_j (\bx) \langle DW(t,\bx),\be_j \rangle \Biggr \}.
	\end{split}
 \end{align}
Note that, aside from the time derivative $W_t (t, \bx)$, this is equation (6.4) in \cite{DupuisWangLeder_wslqIS} (with $\nabla W(x)$ replaced by $\gradx W(t,\bx)$). Define the Hamiltonian $\mathbb{H}$ on $ D \times \R ^d$ by
\begin{align*}
	\mathbb{H} (\bx,\alpha) = \sup_{\bar \lambda (\bx)} \inf_{\hat \lambda (\bx) } \Biggl \{ \sum _{j=1} ^d \Bigr( 2\lambda _j(\bx) l \bigl ( \frac{\hat \lambda _j (\bx)}{\lambda _j (\bx)}\bigr) -  \bar \lambda _j(\bx) l \bigl( \frac{\hat \lambda _j (\bx)}{\bar \lambda _j (\bx)}\bigr) + \hat \lambda_j \b(x) \langle \alpha ,\be_j \rangle\Bigr) \Biggr \}.
\end{align*} 
Recall the definition \eqref{eq:Hamiltonian} of the Hamiltonian $H$,
\begin{align*}
	H(\bx,\alpha) = \sum _{j=1} ^d \lambda _j(\bx) \left( e^{\langle \alpha, \be_j \rangle} - 1\right).
\end{align*}
The following result from \cite{DupuisWangLeder_wslqIS} characterizes saddle points of $\mathbb{H}$.  \begin{proposition}[Proposition 6.2 in  \cite{DupuisWangLeder_wslqIS}]
 \label{prop:Hamiltonian}
 	For any $\bx \in D$ and $\alpha \in \R ^d$,
	\begin{align*}
		\mathbb{H}(\bx,\alpha) = -2 H (\bx, -\frac{\alpha}{2}),
	\end{align*}
	and the saddle point for $\mathbb{H}$ is given by $(\bar \lambda, \hat \lambda)$ such that
	\begin{align*}
		\bar \lambda _j (\bx) = \hat \lambda _j (\bx) = \lambda _j (\bx) e ^{- \frac{\langle \alpha, \be_j \rangle}{2}}.
	\end{align*}
 \end{proposition}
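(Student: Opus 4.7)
The plan is to exploit that the expression inside the $\sup\inf$ defining $\mathbb{H}(x,\alpha)$ is a sum over coordinates $j$ with no cross terms: the $j$-th summand depends only on the scalars $(\bar\lambda_j(x), \hat\lambda_j(x))$ and the constants $\lambda_j(x)$ and $\alpha_j := \langle \alpha, e_j\rangle$. Hence both the inner infimum and the outer supremum decouple coordinate-wise and it suffices to solve, for each $j$ with $\lambda_j(x)>0$, the scalar saddle problem
\begin{align*}
G_j := \sup_{\bar\lambda_j \geq 0}\, \inf_{\hat\lambda_j \geq 0} \Bigl\{ 2\lambda_j\, l\!\left(\tfrac{\hat\lambda_j}{\lambda_j}\right) - \bar\lambda_j\, l\!\left(\tfrac{\hat\lambda_j}{\bar\lambda_j}\right) + \hat\lambda_j\, \alpha_j \Bigr\}
\end{align*}
and show that $G_j = -2\lambda_j(e^{-\alpha_j/2}-1)$ with optimizing pair $\bar\lambda_j = \hat\lambda_j = \lambda_j e^{-\alpha_j/2}$. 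Coordinates with $\lambda_j(x)=0$ contribute nothing to either $\mathbb{H}$ or $H$ and are discarded.

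Using $l(y) = y\log y - y + 1$, the bracketed expression simplifies to
\begin{align*}
\hat\lambda_j\bigl[\log(\hat\lambda_j \bar\lambda_j / \lambda_j^2) + \alpha_j\bigr] - \hat\lambda_j + 2\lambda_j - \bar\lambda_j.
\end{align*}
As a function of $\hat\lambda_j$ on $(0,\infty)$ this is strictly convex (the only nonlinear piece is $\hat\lambda_j \log \hat\lambda_j$), with derivative $\log(\hat\lambda_j \bar\lambda_j /\lambda_j^2) + \alpha_j$ running from $-\infty$ to $+\infty$, so the minimum is attained at the unique critical point $\hat\lambda_j^\star = \lambda_j^2 e^{-\alpha_j}/\bar\lambda_j$. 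Substituting back collapses the bracket to
\begin{align*}
F^*_j(\bar\lambda_j) := -\frac{\lambda_j^2 e^{-\alpha_j}}{\bar\lambda_j} + 2\lambda_j - \bar\lambda_j,
\end{align*}
which is strictly concave on $(0,\infty)$ and diverges to $-\infty$ at both endpoints. Its unique maximizer, found from $\lambda_j^2 e^{-\alpha_j}/\bar\lambda_j^2 = 1$, is $\bar\lambda_j^\star = \lambda_j e^{-\alpha_j/2}$, and feeding this into $\hat\lambda_j^\star = \lambda_j^2 e^{-\alpha_j}/\bar\lambda_j^\star$ gives $\hat\lambda_j^\star = \lambda_j e^{-\alpha_j/2}$ as well, so the optimizer sits on the diagonal $\bar\lambda_j = \hat\lambda_j$ as claimed. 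Direct evaluation gives $G_j = -2\lambda_j(e^{-\alpha_j/2}-1)$, and summing in $j$ yields $\mathbb{H}(x,\alpha) = -2H(x,-\alpha/2)$.

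The subtle point I expect to be the main obstacle is that the original integrand is not jointly convex-concave in $(\bar\lambda_j, \hat\lambda_j)$, so a direct minimax theorem is not available; what makes the argument work is that after the explicit inner minimization the residual one-variable problem is strictly concave with interior maximizer, and at the resulting pair one verifies by substitution that each coordinate is optimal given the other, confirming the saddle-point property. The only boundary issue to check is that the infimum over $\hat\lambda_j$ is indeed interior rather than attained at $0$, which is ensured by the $\log$-singularity forcing the derivative to $-\infty$ at the left endpoint.
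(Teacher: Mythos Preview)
Your proof is correct. The paper does not supply its own proof of this proposition; it is quoted as Proposition~6.2 from \cite{DupuisWangLeder_wslqIS} and used as a black box, so there is nothing in the present paper to compare your argument against.

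One minor remark on your closing paragraph: the integrand \emph{is} in fact concave in $\bar\lambda_j$ for fixed $\hat\lambda_j$ (it reduces to $\hat\lambda_j\log\bar\lambda_j - \bar\lambda_j + \text{const}$) and convex in $\hat\lambda_j$ for fixed $\bar\lambda_j$, so a standard minimax argument would also be available. Your concern about joint convex--concavity failing is unfounded, but since your explicit computation goes through regardless, this does not affect the validity of the proof.
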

As mentioned in \cite{DupuisWangLeder_wslqIS}, from the existence of saddle points for $\mathbb{H}$ one can argue that the factor $\hat \Lambda (\bx) ^{-1}$ in \eqref{eq:IsaacsAlt} can be removed. Indeed, the time derivative does not change this and the Isaacs equation \eqref{eq:IsaacsAlt} becomes
 \begin{align}
 \label{eq:Isaacs}
 	W_t (t, \bx) + \mathbb{H}(\bx, \gradx W(t,\bx)) = 0.
 \end{align}
From the definition of the value function $V^n (t,\bx)$ it is clear that $W^n(t,\bx)$ must satisfy the terminal condition
\begin{align*}
	W^n (T,\bx) =  \begin{cases} 0, & \bx \in D_z, \\
	\infty, & \textrm{otherwise}, \end{cases}
\end{align*}
which in turn carries over to the function $W$. Proposition \ref{prop:Hamiltonian} combined with this terminal condition implies that the Isaacs equation \eqref{eq:Isaacs} is indeed the Hamilton-Jacobi equation
 \begin{align}
  \label{eq:IsaacsFinalApp}
  \begin{cases}
   	W_t (t,\bx) - 2 H \left(\bx, - \frac{1}{2}\gradx W(t,\bx) \right) = 0 & (t,\bx) \in [0,T) \times D \setminus D_z, \\
	W(T,\bx) = 0, & \bx \in D_z.
	\end{cases}
 \end{align}

\end{document}